\newtheorem{thm}{Theorem}[section]
\newtheorem*{definition*}         {Definition}
\newtheorem{lemma}[thm]{Lemma}
\newtheorem{cor}[thm]{Corollary}
\newtheorem*{remark}{Remark}
\theoremstyle{remark}
\newcommand*{\Oo}{\mathcal{O}}
\newcommand*{\Q}{\mathbb{Q}}
\newcommand*{\Z}{\mathbb{Z}}
\newcommand*{\G}{\mathbf{G}}
\newcommand*{\GG}{\mathbb{G}}
\newcommand*{\C}{\mathbb{C}}
\newcommand*{\PP}{\mathbb{P}}
\newcommand*{\ra}{\rightarrow}
\newcommand*{\ol}{\overline}
\def\ker{{\rm Ker}}
\definecolor{purple}{rgb}{0.59, 0.44, 0.84}
\def\hash{\#}
\title{Ax-Schanuel and exceptional integrability}
\author{Jonathan Pila and Jacob Tsimerman}
\begin{document}
\maketitle

\begin{abstract} 
When can a primitive of a given algebraic 
function be constructed by iteratively solving 
algebraic equations and composing with the 
primitives of some other given algebraic 
functions or their inverses? 
We establish some results in this direction.
Specifically, we establish decision procedures 
for determining whether a given primitive can 
be expressed in terms of finitely many others, 
or in terms of elliptic integrals. 

\end{abstract}

\bigskip

\leftline{\bf Contents}

\medskip

1. Introduction and main results

2. Ax-Schanuel for a connected commutative complex 
algebraic group

3. Ax-Schanuel for complex curves and differentials

4. The Seidenberg Elimination Theorem

5. Exceptional integrability in a differential field

6. Hodge-theoretic perspective on meromorphic 
differentials

7. Decision procedures

8. A connection with unlikely intersections

\ \ \, Acknowledgements

\ \ \, References


\medskip

\section{Introduction and main results}

This paper is concerned with the following 
question. Given an algebraic function $\alpha(x)$, 
when can a primitive (i.e. an anti-derivative) 
of it be constructed by iteratively solving 
algebraic equations and composing with the 
primitives of some other given algebraic 
functions or their inverses? This is the 
exceptional integrability of the title.

\begin{definition*}
Let $B=(B_1,\ldots, B_k)\in \C[X,Y]^k$ be a 
sequence of irreducible polynomials. We say 
that a function $z(x)$, regular on some disk
$\Delta\subset\C$, is \emph{$B$-strictly elementary} 
if there is an open disk $\Delta^*\subset \Delta$ 
and a sequence of pairs
of functions $(x_1, z_1),\ldots, (x_k, z_k)$,
all regular in $\Delta^*$ (as functions of $x$),
such that:

\smallskip

\item{1.} For each $i=1,\ldots, k$,
$B_i\big(x_i, \frac{dz_i}{dx_i}\big)=0$;

\item{2.} For each $i$, either $x_i$ or $z_i$ 
is algebraic over 
$\C\big(x, x_1, z_1, \ldots, x_{i-1}, z_{i-1}\big)$;

\item{3.} $z$ is algebraic over 
$\C\big(x, x_1, z_1, \ldots, x_k, z_k\big)$.

\smallskip

Let $S\subset \C[X,Y]$ be a set of 
irreducible polynomials. We say that a function
$z$ is \emph{$S$-elementary} if there is a 
non-negative integer $k$ and a tuple 
$(B_1,\ldots, B_k)\in S^k$ such that $z$ is 
$(B_1,\ldots, B_k)$-strictly elementary.

\end{definition*}

By an {\it algebraic function\/} we mean a function
$\alpha(x)$, regular on an open disk 
$\Delta\subset \C$, such that there is an algebraic
relation $A(x, \alpha(x))=0$ for $x\in\Delta$. 
Here $A\in \C[X,Y]$ is non-zero and irreducible. 
Thus an $\emptyset$-elementary function is simply 
an algebraic function of $x$. 

An $\{XY-1\}$-elementary function is what is 
classically known as an {\it elementary function\/}, 
as we have $z_i=\log x_i+c$ or 
$x_i=c\exp z_i$, and the question of whether 
a given function has an elementary primitive 
is the question of {\it elementary integrability\/}. 
Elementary integrability is characterized by a 
classical theorem of Liouville 
(see e.g. \cite{ROSENLICHTMONTHLY}; for extensions
see e.g. \cite{SINGERSAUNDERSCAVINESS}). 
Based on this result, Risch \cite{RISCH}
gave an algorithm to decide, in various situations,
whether a given elementary function 
has an elementary primitive 
(see also \cite{DAVENPORT, LAZARD}). 


\begin{definition*}
Suppose that $w(x)$ is a function regular on an 
open disk $\Delta\subset\C$, and 
$B=(B_1,\ldots, B_k)\in \C[X,Y]^k$ and 
$S\subset \C[X,Y]$ as before. We say that $w$ 
is $B$-strictly integrable if some (equivalently 
every) primitive $z$ of $w$ is $B$-strictly
elementary. We say that $w$ is $S$-integrable 
if some (equivalently every) primitive of $w$
is $S$-elementary, and we will say alternatively that
$w$ is integrable in terms of $S$.

\end{definition*}

We are concerned then with the question of when
an algebraic function $\alpha(x)$, defined by some
$A(x, \alpha(x))=0$, is integrable in terms of
some given set $S$.
We will see that $(B_1,\ldots, B_k)$-strict 
integrability is really a property of $A$ 
and the $B_i$, not the specific 
branch $\alpha$. More precisely, if some branch 
$\alpha(x)$ of $A(x,y)=0$ on some disk $\Delta$ is 
$(B_1,\ldots, B_k)$-strictly integrable then, 
given any other branch $\alpha'$ and disk 
$\Delta'$, $\alpha'$ is 
$(B_1,\ldots, B_k)$-strictly integrable on (some open 
subdisk $\Delta'_*$ of) $\Delta'$. In fact, suitably
formulated, it is a property of $A, B_1,\ldots, B_k$ 
in any differential field. 

Our first main result is the following, which we 
will establish in a more precise version in the 
setting of a differential field.

\begin{thm}
Suppose that $\alpha(x)$ is an algebraic function  
on a disk $\Delta$ which is 
$(B_1,\ldots, B_k)$-strictly integrable.
Then there is an open disk 
$\Delta_*\subset\Delta$, regular functions 
$x_1,\ldots, x_k, z_1,\ldots, z_k$ and $z$ 
on $\Delta_*$, constants $c_1,\ldots, c_k$, 
and an algebraic function $\gamma(x)$, such that:

\item{1.} $z$ is a primitive of $\alpha$

\item{2.} For $i=1,\ldots, k$, $B_i(x_i, \frac{dz_i}{dx_i})=0$

\item{3.} For $i=1,\ldots, k$, $x_i$ is algebraic over $x$

\item{4.} $z=\sum_{i=1}^k c_iz_i+\gamma$.

\end{thm}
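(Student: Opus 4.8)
The plan is to convert the hypothesis of $(B_1,\ldots,B_k)$-strict integrability, which a priori allows the intermediate data to be built up over a tower of algebraic extensions interleaved with the transcendental functions $z_i$, into the much more rigid conclusion in which every $x_i$ is algebraic over $x$ directly and the primitive $z$ is literally a $\C$-linear combination of the $z_i$ plus an algebraic function. Let me think about how this should go.

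Let me set up the differential-field framework first. By hypothesis there is a disk $\Delta_*$ and regular functions $(x_i,z_i)$ satisfying conditions 1--3 of the first definition, and a primitive $z$ of $\alpha$ that is algebraic over $\C(x,x_1,z_1,\ldots,x_k,z_k)$. The key structural fact is that each $z_i$ is a primitive: from $B_i(x_i,dz_i/dx_i)=0$ we get that $dz_i/dx_i$ is an algebraic function of $x_i$, say $\beta_i(x_i)$, so $z_i=\int\beta_i(x_i)\,dx_i$ and hence $dz_i=\beta_i(x_i)\,dx_i$ as differentials. The differential field generated by all this data is a finite-transcendence-degree extension of $\C(x)$, and the crucial observation is that $dz=\alpha\,dx$ lies in the $\C$-span of the exact differentials $d(\text{algebraic})$ together with the differentials $\beta_i(x_i)\,dx_i$.

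Let me think about the main argument. The hard part will be controlling the transcendence degree and pinning down the coefficients, and this is exactly where I would invoke the Ax--Schanuel theorem stated earlier (Section 3, Ax--Schanuel for complex curves and differentials). The situation is that $z$ and the $z_i$ are primitives of algebraic differentials, and there is an algebraic relation expressing $z$ in terms of the $x_i,z_i$. Ax--Schanuel should tell me that any algebraic relation among primitives of algebraic differentials, beyond those forced by algebraic relations among the differentials themselves, forces a $\C$-linear relation among the functional parts plus an algebraic correction. Concretely, I would differentiate the algebraic relation witnessing that $z\in\ol{\C(x,x_1,z_1,\ldots,x_k,z_k)}$, obtaining a relation among the differentials $dz=\alpha\,dx$ and the $dz_i=\beta_i(x_i)\,dx_i$ and $dx_i$ with coefficients in the relevant field. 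Ax--Schanuel then constrains the possible algebraic dependencies, and by peeling off the transcendental contributions one sees that $dz-\sum_i c_i\,dz_i$ must be the differential of an algebraic function, with the $c_i$ constants; integrating recovers condition 4. The reduction of each $x_i$ to being algebraic over $x$ alone (condition 3, strengthened from the tower hypothesis) should follow by an induction on $k$, at each stage using Ax--Schanuel to show that any genuinely transcendental $z_j$ appearing cannot be entangled with the $x_i$ except through algebraic relations, so the $x_i$ descend to $\ol{\C(x)}$.

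I anticipate the main obstacle to be the bookkeeping of the tower structure in condition 2 of the first definition, where at each level either $x_i$ or $z_i$ is algebraic over the previously generated field. One must argue that the $z_i$ for which $z_i$ (rather than $x_i$) is the algebraic one contribute nothing transcendental and can be absorbed into $\gamma$, while the remaining $z_i$ are honest new transcendentals whose coefficients $c_i$ are forced to be constant by the Ax--Schanuel dimension count. Making this dichotomy precise, and ensuring the induction closes so that the surviving $x_i$ are algebraic over $x$, is the delicate part; the linear-combination conclusion itself then falls out once the transcendence structure is understood. I would carry this out by induction on $k$, treating the top level of the tower and invoking the inductive hypothesis on the field generated by the lower levels.
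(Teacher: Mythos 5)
Your instinct to route everything through the Ax--Schanuel theorem for curves and differentials (Theorem 3.2) is exactly the paper's route, but your sketch omits the one step that makes that theorem applicable, and the mechanism you substitute for it is a different (and unjustified) argument. The whole point of the tower condition is quantitative: since at stage $i$ either $x_i$ or $z_i$ is algebraic over $\C(x, x_1, z_1, \ldots, x_{i-1}, z_{i-1})$, while $B_i(x_i, dz_i/dx_i)=0$ ties the pair together, each pair $(x_i,z_i)$ raises the transcendence degree by at most one, and $z$ raises it by zero; hence
$$
{\rm tr.deg.}\,\C\big(x, x_1, z_1, \ldots, x_k, z_k, z\big)/\C \;\le\; k+1.
$$
Since $A(x, dz/dx)=0$, the pair $(x,z)$ is of the same kind as the others, so (after discarding exact $\omega_i$ and constant $x_i$, as the paper does) Theorem 3.2 applies verbatim to the $k+1$ pairs and \emph{directly} outputs the linear relation $\sum_{i\in I} c_i(z_i+\gamma_i)=0$. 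No differentiation of the algebraic relation witnessing $z$'s algebraicity, and no ``peeling off transcendental contributions,'' is needed or available: that differentiate-and-compare scheme is Liouville's proof mechanism, not how Ax--Schanuel enters, and you give no argument that it closes.

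Two further steps in your plan are wrong as stated. First, the dichotomy ``levels where $z_i$ is the algebraic one contribute nothing transcendental and can be absorbed into $\gamma$'' fails: $z_i$ algebraic over the tower field $\C(x, x_1, z_1, \ldots, x_{i-1}, z_{i-1})$ need not be algebraic over $\C(x)$ (it may coincide with an earlier transcendental $z_j$ up to algebraic functions), so it cannot be pushed into $\gamma$. Second, you cannot prove that the \emph{original} $x_i$ descend to $\overline{\C(x)}$ --- they need not (take $B_i=Y$, forcing $z_i$ constant, with $x_i$ an arbitrary transcendental function); the theorem asserts the existence of \emph{new} witnesses. In the paper these come from the final clause of Theorem 3.2: the locus lies over an algebraic curve in $Y_1\times\cdots\times Y_\ell$ dominant to each factor, so the coordinates are mutually algebraic, and in particular algebraic over $x$. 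And the case $0\notin I$ is handled not by your induction on the tower but by an iteration on the Ax--Schanuel output (proof of Theorem 3.3): if $0\notin I$, the highest index $i\in I$ has both $\xi_i, z_i$ algebraic over the remaining data, so that pair can be deleted from the sequence preserving strict integrability, and one repeats until $0\in I$. So while you name the correct key theorem, the three load-bearing steps --- the count $\le k+1$, the deletion argument, and the production of new mutually algebraic witnesses --- are left unproved, and the replacements you propose for the latter two would fail.
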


This can be seen as a generalization (to arbitrary
$B_i$) of the restriction of Liouville's theorem 
to algebraic functions $\alpha$. Liouville's Theorem
implies (in particular) the same condition for the
elementary integrability of any elementary function.
The converse clearly holds.

The key observation behind the theorem 
is that $B$-strict integrability implies that 
we are in the exceptional case of the 
Ax-Schanuel theorem for a suitable connected 
commutative complex algebraic group $G$ 
(more specifically, a product of generalized 
Jacobians). We will see further that the linear 
relation  afforded by the theorem is of a 
particular form: it corresponds to a defining 
equation of a coset of a suitable algebraic 
subgroup of $G$. Looking for such a connection 
was suggested by the results of Masser-Zannier
\cite{MASSERZANNIER} (see also \cite{ZANNIERICM})
connecting questions of elementary integrability 
in a pencil of meromorphic differentials on
curves with problems of Zilber-Pink type.

Let $\alpha(x)$ be an algebraic function defined
by $A(x, \alpha(x))=0$.
By supplying some additional algebraic functions 
$\xi=(x,\ldots)$, algebraic over $x$, we can 
associate with $A$ a smooth projective
curve $X$ and meromorphic differential $\omega$ on $X$
with $\omega=\alpha(x)dx$. We can then recast 
the integrability problem in terms of pairs 
$(X, \omega)$ of smooth projective curves and 
meromorphic differentials. This will lead us to a 
geometric reformulation and refinement
of the above result. We will write $z=\int^\xi\omega$
to mean that $z$ is a multi-valued function on $X$
obtained by integrating the form $\omega$. If $x$ 
is a uniformizing function on $X$ then there is 
some associated polynomial $A\in\C[X,Y]$ 
(depending on $X, \omega$, and $x$) such that 
$\omega=\alpha(x)dx$ where
$A \big(x, \alpha(x)\big)=0$. Thus considering $z$ as a 
function of $x$ we have $A(x, dz/dx)=0$.

\begin{definition*}
Let $(X_0, \omega_0)$ and 
$(X_1, \omega_1),\ldots, (X_k, \omega_k)$ be pairs 
of smooth projective curves over $\C$ and meromorphic
differentials.  We say that  $(X_0, \omega_0)$ is 
\emph{$\big((X_1, \omega_1),\ldots, (X_k, \omega_k)
\big)$-strictly integrable} if for some 
(or equivalently any) choice
of non-constant rational functions $x_0, x_i$ of the curves, 
with associated polynomials $A, B_i\in\C[X,Y]$,
setting
$z_0=\int^{\xi_0}\omega, z_i=\int^{\xi_i}\omega_i$, 
we have that $z_0$ is $(B_1,\ldots, B_k)$-strictly
integrable. We define $S$-integrability for finite 
sets of curves and differentials in analogy with 
the definition for algebraic functions.

\end{definition*}

We will prove a reformulation of Theorem 1.1 in 
terms of curves and differentials. This result 
(Theorem 3.3) is stated and proved in section 3. 
The algebraic relations among the uniformizing 
variables gives a curve 
$Z\subset X_0\times X_1\times\ldots X_k$ giving
a correspondence between $X_0$ and each $X_i$. This
allows us to pull back differentials 
from $X_i$ to $Z$ and to express integrability 
in terms of their traces to $X_0$.

\begin{definition*}

Let $(X_i,\omega_i), i=1,2$ be curves with 
differentials. Let $Z\subset X_1\times X_2$ be a 
curve with no fibral-components. Then if
$\omega_2=\pi_{2*}\pi_1^* \omega_1$ we say 
that $\omega_2$ is a \emph{trace-image} of 
$\omega_1$. 

\end{definition*}

\begin{thm}
Let $(X_i,\omega_i), 0\leq i\leq k$ be smooth
projective curves with meromorphic differentials. 
The following conditions are equivalent.

\smallskip

    \item{1.} $(X_0,\omega_0)$ is integrable in terms 
    of $\{(X_1,\omega_1), \ldots, (X_k, \omega_k)\}$
    
    \item{2.} $\omega_0$ is in the linear span of differentials on $X_0$ which are trace-images of the $\omega_i$, and exact differentials.
    

\end{thm}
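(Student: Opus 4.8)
The plan is to deduce Theorem 3.3 from Theorem 1.1 by translating the algebraic relation it produces into the language of correspondences, pullbacks, and traces of differentials. The basic dictionary is this: an algebraic function $x_i$ over $x_0$ cuts out a correspondence $Z\subset X_0\times X_i$ with projections $\pi_0,\pi_i$; the condition $B_i(x_i,dz_i/dx_i)=0$ says exactly that $dz_i=\pi_i^*\omega_i$ on $Z$; and the trace (pushforward) $\pi_{0*}$ of a differential obeys two facts I will use repeatedly, namely that it commutes with $d$ (equivalently with integration, up to an additive constant), so that $\pi_{0*}(df)=d(\mathrm{Tr}\,f)$ is exact whenever $f$ is a rational function, and that $\pi_{0*}\pi_0^*\omega_0=(\deg\pi_0)\,\omega_0$.

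For the implication $(1)\Rightarrow(2)$, I would begin by choosing uniformizers $x_0,x_i$ and associated polynomials $A,B_i$, so that $z_0=\int^{\xi_0}\omega_0$ is $(B_1,\ldots,B_k)$-strictly integrable for some tuple drawn from the $B_i$. Applying Theorem 1.1 produces constants $c_i$, functions $z_i$ with $B_i(x_i,dz_i/dx_i)=0$ and $x_i$ algebraic over $x_0$, an algebraic $\gamma$, and a primitive $z$ of $\omega_0$ with $z=\sum_i c_i z_i+\gamma$. Let $Z$ be (the normalization of) the curve in $X_0\times X_1\times\cdots\times X_k$ cut out by the algebraic relations among the $x_i$; discarding any index for which $x_i$ is constant (its term contributes $\pi_i^*\omega_i=0$) we may assume each projection $Z\to X_i$ is dominant, so the resulting correspondences have no fibral components. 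Differentiating the relation on $Z$ gives $\pi_0^*\omega_0=dz=\sum_i c_i\pi_i^*\omega_i+d\gamma$, and applying $\pi_{0*}$ and dividing by $\deg\pi_0$ expresses $\omega_0$ as a $\C$-linear combination of the trace-images $\pi_{0*}\pi_i^*\omega_i$ together with the exact differential $d(\mathrm{Tr}\,\gamma)/\deg\pi_0$, which is $(2)$. (If $Z\to X_0\times X_i$ has degree $e_i$, the corresponding term is $e_i$ times a genuine trace-image along the image correspondence, changing only the constant $c_i$.)

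For the converse $(2)\Rightarrow(1)$, suppose $\omega_0=\sum_j a_j\,\eta_j+df$ with each $\eta_j=\pi_{0*}\pi_{i(j)}^*\omega_{i(j)}$ a trace-image along a correspondence $Z_j\subset X_0\times X_{i(j)}$ with no fibral components, and $f$ rational on $X_0$. Over $x_0$ the projection $Z_j\to X_0$ has some degree $d_j$, giving algebraic functions $x_{i(j)}^{(s)}$, $s=1,\ldots,d_j$, over $x_0$, and along each sheet a local primitive $z_{i(j)}^{(s)}$ of $\omega_{i(j)}$ with $B_{i(j)}(x_{i(j)}^{(s)},dz_{i(j)}^{(s)}/dx_{i(j)}^{(s)})=0$. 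Since integration commutes with the trace, $\int\eta_j=\sum_s z_{i(j)}^{(s)}$ up to a constant, so $z_0=\sum_j a_j\sum_s z_{i(j)}^{(s)}+f$ is a primitive of $\omega_0$. Listing the pairs $(x_{i(j)}^{(s)},z_{i(j)}^{(s)})$ in any order realizes $z_0$ as $(B_{i(1)},\ldots)$-strictly elementary: condition $1$ is the defining relation $B_{i(j)}$, condition $2$ holds because each $x_{i(j)}^{(s)}$ is algebraic over $x_0$, and condition $3$ holds because $z_0$ is a $\C$-linear combination of the $z_{i(j)}^{(s)}$ plus the algebraic $f$. As each $B_{i(j)}$ is the polynomial of a member of $S$, the tuple lies in $S^k$ and $(X_0,\omega_0)$ is integrable in terms of $S$.

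The step I expect to require the most care is the functoriality of the trace for meromorphic differentials on the correspondence curves: verifying that $\pi_{0*}$ is well-defined and $\C$-linear, that it commutes with $d$ and with integration up to additive constants, that $\pi_{0*}\pi_0^*=\deg\pi_0$, and that pushing forward along $Z$ rather than along its image correspondence in $X_0\times X_i$ differs only by the multiplicity $e_i$. Keeping the bookkeeping of these degrees consistent, and carefully excluding the fibral and constant cases so that every correspondence legitimately has no fibral components, is where the argument must be watertight; the remainder is a direct translation of the linear relation supplied by Theorem 1.1.
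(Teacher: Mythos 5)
Your proof is correct, and its first half coincides with the paper's: the forward implication is obtained by feeding strict integrability into the structure theorem (you use Theorem 1.1; the paper invokes its geometric form, Theorem 3.3, which directly supplies the curve $Z$ dominant to each factor, so one does not even need to rebuild $Z$ from the algebraic functions $x_i$) and then applying the trace $\pi_{0*}$, with the same degree bookkeeping. The genuine difference is in the converse. The paper makes the ``sum over sheets'' idea algebraic: for each correspondence $D_i$ of degree $d_i$ over $X_0$ it forms the maximal reduced subscheme $E_i$ of the $d_i$-fold fiber product of $D_i$ over $X_0$ whose generic points admit $d_i$ distinct maps to $D_i$, takes the fiber product $C$ of all the $E_i$, and uses the resulting $\prod_i S_{d_i}$-torsor structure over $X_0$ to expand the pullback of each trace-image as a sum of pullbacks; pulling the linear relation back to an irreducible component of $C$ then yields integrability in the geometric form of Theorem 3.3. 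You instead realize the sheets analytically, over a disk in $X_0$ avoiding branch points, and verify the definition of strict elementarity directly; this is more elementary and perfectly adequate, since integrability is defined disk-locally, whereas the paper's construction buys the stronger geometric conclusion (a single algebraic curve mapping to all the factors at once). One detail to tighten in your forward direction: $\mathrm{Tr}\,\gamma$ only makes sense if $\gamma$ is rational on $Z$, whereas Theorem 1.1 gives it merely as algebraic over $\C(x_0)$; either enlarge $Z$ to the Zariski closure of the locus $(x_0,\ldots,x_k,\gamma)$ before tracing, or observe that $d\gamma=\pi_0^*\omega_0-\sum_i c_i\pi_i^*\omega_i$ is rational on $Z$ and that an algebraic function whose differential is rational on $Z$ lies in $\C(Z)$ up to an additive constant (its conjugates over $\C(Z)$ differ from it by constants), so no enlargement is actually needed.
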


This theorem can then serve as the basis of a 
decision procedure when all the curves and
differentials are defined over an explicitly given
finite type field. This generalizes to general sets $S$
the special case of Risch's algorithm in which
$\alpha$ is algebraic rather than just elementary
(on this see \cite{DAVENPORT}).

\begin{thm}
Let $K$ be a finite type field of characteristic zero.
There is a decision procedure with the following
property. Given pairs $(X_0, \omega_0)$ and
$(X_1, \omega_1),\ldots, (X_k, \omega_k)$ 
of smooth projective complex curves and meromorphic
differentials on them, defined over $K$,
the procedure decides  whether $(X_0, \omega_0)$ 
is integrable in terms of
$\{(X_1, \omega_1), \ldots, (X_k, \omega_k)\}$.

\end{thm}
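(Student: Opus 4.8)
The plan is to reduce the decision problem to a finite linear-algebra computation over $K$, using Theorem 1.2 as the criterion. By that theorem, $(X_0,\omega_0)$ is integrable in terms of $\{(X_i,\omega_i)\}$ precisely when $\omega_0$ lies in the $\C$-linear span of exact differentials together with differentials on $X_0$ that are trace-images of the $\omega_i$. Working in the de Rham cohomology $H^1_{\mathrm{dR}}(X_0)$ kills the exact differentials automatically, so the condition becomes: the class $[\omega_0]$ lies in the subspace of $H^1_{\mathrm{dR}}(X_0)$ spanned by the classes of trace-images of the $\omega_i$. The issue is that the trace-image depends on a choice of correspondence $Z\subset X_0\times X_i$, and there are infinitely many such $Z$, so the span is a priori an infinite union of finite-dimensional computations.

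First I would set up the algebraic framework so that everything is effective. Since $K$ is a finite type field of characteristic zero, it is explicitly presented (a finite extension of a purely transcendental extension of $\Q$), and one can compute in $K$, factor polynomials, and carry out Gr\"obner-basis eliminations over $K$ — here the Seidenberg Elimination Theorem of Section~4 supplies the needed effectivity. For each curve $X_i$, defined over $K$, one computes a basis of $H^1_{\mathrm{dR}}(X_i)$ over $K$ (equivalently, works with the finite-dimensional space of meromorphic differentials modulo exact ones, which is algebraic de Rham cohomology and is computable from the defining equations). The class $[\omega_i]\in H^1_{\mathrm{dR}}(X_i)$ is then an explicit $K$-vector.

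The main obstacle — and the crux of the proof — is to bound the correspondences $Z$ that need to be considered, so that the span of trace-images becomes a finite-dimensional, effectively computable $K$-subspace of $H^1_{\mathrm{dR}}(X_0)$. The key point is that trace-image is functorial and linear at the level of cohomology: a correspondence $Z\subset X_0\times X_i$ induces a $K$-linear map $\pi_{0*}\pi_i^*\colon H^1_{\mathrm{dR}}(X_i)\to H^1_{\mathrm{dR}}(X_0)$, and this map factors through the image of $Z$ in $\Hom\big(H^1_{\mathrm{dR}}(X_i),H^1_{\mathrm{dR}}(X_0)\big)$. The space of such induced maps is governed by $\Hom(\mathrm{Jac}\,X_i,\mathrm{Jac}\,X_0)$ (or the relevant generalized Jacobians), which is a finitely generated abelian group whose image in the finite-dimensional $K$-vector space $\Hom\big(H^1_{\mathrm{dR}}(X_i),H^1_{\mathrm{dR}}(X_0)\big)$ spans a computable finite-dimensional $K$-subspace. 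Thus the total span of all trace-images of $\omega_i$, over all correspondences, is the $K$-span of $\{\,\phi([\omega_i]) : \phi\in \mathrm{Hom}_{\mathrm{dR}}, \ 1\le i\le k\,\}$, a finite-dimensional subspace $V\subseteq H^1_{\mathrm{dR}}(X_0)$.

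It remains to make this span effective and to test membership. One computes a spanning set for the space of trace-correspondence-induced maps $H^1_{\mathrm{dR}}(X_i)\to H^1_{\mathrm{dR}}(X_0)$ — for instance by parametrizing correspondences of bounded degree, which suffices once one knows that homomorphisms of Jacobians are realized by correspondences and that the induced maps on cohomology respect the Hodge/polarization structure, so finitely many generators suffice. Applying these generating maps to the explicit vectors $[\omega_i]$ yields a finite spanning set for $V\subseteq H^1_{\mathrm{dR}}(X_0)$. The decision procedure then factors $[\omega_0]$ against $V$ by solving a linear system over $K$, which is decidable by Gaussian elimination. The hard part is establishing the effective bound on the correspondences needed to generate $V$; I expect this to rely on the finite generation of $\Hom$ of the Jacobians together with explicit bounds coming from Seidenberg elimination applied to the space of curves $Z\subset X_0\times X_i$ of bounded degree, and it is here that the Hodge-theoretic description of Section~6 does the real work.
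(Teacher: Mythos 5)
Your use of Theorem 1.2 as the criterion, and your plan for the residue-free situation (finite-dimensional spaces of forms modulo exact ones, with the span of trace-images generated via the finitely generated group of correspondences between the curves), does track the paper's first case. But there is a genuine gap: your claim that passing to $H^1_{\mathrm{dR}}(X_0)$ ``kills the exact differentials automatically'' and reduces everything to a finite-dimensional linear system over $K$ is only valid when none of the $\omega_i$, $i\ge 1$, has a residue. Meromorphic forms modulo exact forms constitute the infinite-dimensional space $\varinjlim_U H^1(U,\C)$, and as soon as some $\omega_i$ carries a nonzero residue, its trace-images generate, among other things, \emph{every} form $d\log f$ (trace $\omega_i$ through a suitable map to $\PP^1$ to obtain a multiple of $dx/x$, then compose correspondences); so your span $V$ is infinite-dimensional and is not obtained by applying finitely many cohomological maps to the classes $[\omega_i]$. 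This is why the paper's procedure splits into two cases: in the residue case it must quotient further by the log-exact forms, identifying the quotient with $H^1(X_0,\C)/F^1H^1(X_0,\C)\otimes_\Z\C$ (Theorem \ref{residueisom}), whose nontrivial part is $J_0\otimes_\Q\C$ with $J_0$ the Jacobian.

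Membership in that quotient is \emph{not} linear algebra over $K$: deciding whether the residue data of $\omega_0$ and of candidate trace-images satisfy a linear relation in $J_0(\ol K)\otimes_\Q\C$ is a question about relations in a finitely generated subgroup of an abelian variety over a finite type field, and the paper devotes Lemma \ref{lem: Computing Mordell-Weil Groups} to it (computing kernels of maps $\Z^n\ra A(\ol K)$ via N\'eron--Tate heights over number fields, Masser's specialization theorem for finite type fields, and a search-and-certify loop). Your proposal has no analogue of this arithmetic step, and Gaussian elimination over $K$ cannot replace it: whether two $K$-points of $J_0$ are $\Z$-linearly dependent is not visible from their coordinates as a linear system. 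Two secondary corrections: the effective generation of correspondences, which you leave as an expectation, is precisely what the paper imports from Poonen--Testa--van Luijk (computation of the N\'eron--Severi group of $X_0\times X_i$, Theorem 8.15 of \cite{PTV}), not something extracted from finite generation of $\Hom$ of Jacobians alone; and Seidenberg elimination plays no role in the decision procedure itself --- in the paper it is used in \S5 to prove the differential-field form of the integrability theorem, not to bound correspondences.
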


Hardy, in his book \cite{HARDY} on the integration
of univariate functions, discusses (p9-10) the known
results concerning the integration of algebraic 
functions, in particular concerning whether they
are or are not elementary. He notes (p10) that 
there are cases in which ``integrals associated 
with curves whose
deficiency [i.e genus] is greater than unity are 
in reality reducible to elliptic integrals'', and
observes that ``no general method has been
devised by which we can always tell, after a 
finite series of operations, whether any given
integral is really elementary, or elliptic, or
belongs to a higher order of transcendents''. 
A precise definition of ``reducible to elliptic
integrals'' is not given, but the following could
be considered as providing a procedure for a 
natural formulation of this question.

\begin{thm}
Let $K$ be a finite type field of characteristic 
zero. Let $(X,\omega)$ be a curve with a rational 
differential form, defined over $K$. Then there 
is a decision procedure for determining whether
$(X,\omega)$ is integrable in terms of the set 
of all elliptic curves (over $\ol K$) with all rational 
differentials (over $\ol K$) on them.
\end{thm}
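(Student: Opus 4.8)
The plan is to reduce this infinite problem to the finite case settled by Theorem 1.3, by producing an explicitly computable finite set $S_0$ of elliptic-curves-with-differentials for which integrability in terms of $S_0$ is equivalent to integrability in terms of the full set $S$ of all elliptic curves with all rational differentials. First I would use Theorem 1.2 to restate the question cohomologically: writing $P$ for the polar support of $\omega$ and $W:=H^1_{dR}(X\setminus P)$ for the finite-dimensional space of meromorphic differentials on $X$ with poles in $P$ modulo exact differentials, $(X,\omega)$ is integrable in terms of $S$ precisely when $[\omega]\in V$, where $V\subseteq W$ is the span of the classes of all trace-images of rational differentials on elliptic curves. Since each such trace-image comes from a correspondence $Z\subset X\times E$, hence from a morphism of mixed Hodge structures $H^1_{dR}(E\setminus Q)\to W$, and since rational differentials on $E$ represent every class in $H^1_{dR}(E\setminus Q)$, the subspace $V$ is a \emph{sub-mixed-Hodge-structure} of $W$; in particular it is compatible with the weight filtration, whose nonzero graded pieces are $\mathrm{gr}^W_1 W=H^1_{dR}(X)$ and $\mathrm{gr}^W_2 W$, the space of residues (degree-zero divisors supported on $P$).

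I would then compute $V$ weight by weight. On $\mathrm{gr}^W_1$ the subspace $\mathrm{gr}^W_1 V$ is the sum of the images of all correspondence-induced maps $H^1_{dR}(E)\to H^1_{dR}(X)$; by semisimplicity of polarizable pure Hodge structures together with Poincar\'e reducibility this is exactly the sub-Hodge-structure $H^1_{dR}(X)_{\mathrm{ell}}$ cut out by the finitely many elliptic isogeny factors $E_1,\dots,E_r$ of $J(X)$. Because $V$ is a sub-mixed-Hodge-structure and $W$ has lowest weight $1$, one gets $V\cap W_1=\mathrm{gr}^W_1 V=H^1_{dR}(X)_{\mathrm{ell}}$, so third-kind contributions cannot enlarge the weight-one part. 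For $\mathrm{gr}^W_2$ I would fix one auxiliary elliptic curve $E_0$ isogenous to none of $E_1,\dots,E_r$ (for instance one with $j(E_0)$ transcendental over $K$), so that every correspondence $Z\subset X\times E_0$ kills $H^1_{dR}(E_0)$; then the trace-images of third-kind differentials on $E_0$ span a subspace $R_0$ with $R_0\cap W_1=0$. A residue-realization step---for each pair of points of $P$, building a correspondence to $E_0$ whose trace-image has prescribed opposite residues at those points and poles only in $P$---shows $R_0$ surjects onto $\mathrm{gr}^W_2 W$. Comparing dimensions then gives $W=W_1\oplus R_0$ and $V=H^1_{dR}(X)_{\mathrm{ell}}\oplus R_0$.

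With this description of $V$ in hand, I would let $S_0$ consist of each factor $E_i$ equipped with a basis $\{\omega_{E_i},\eta_i\}$ of $H^1_{dR}(E_i)$ (one holomorphic, one of the second kind) together with the single pair $(E_0,\eta_0)$ for a third-kind $\eta_0$; all are defined over a finitely generated field $K'\supseteq K$. The computation above shows that the span of trace-images from $S_0$ is again $V$, so $(X,\omega)$ is integrable in terms of $S$ if and only if it is integrable in terms of $S_0$, and the latter is decidable by Theorem 1.3 over $K'$. Computing $W$ and the class $[\omega]$, and carrying out the residue realization, are then routine finite-dimensional linear algebra and explicit curve geometry over $K'$.

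The hard part will be the effectivity of extracting the elliptic isogeny factors $E_1,\dots,E_r$ and the subspace $H^1_{dR}(X)_{\mathrm{ell}}\subseteq H^1_{dR}(X)$: this is an effective Poincar\'e-reducibility, or equivalently endomorphism-algebra, computation for the Jacobian $J(X)$ over a finitely generated field, and it is the only step whose algorithmic content goes beyond linear algebra and the appeal to Theorem 1.3. Everything else---the mixed-Hodge-structure bookkeeping that confines the relevant poles to $P$, the choice of the generic auxiliary curve $E_0$, and the residue realization---is elementary once this isogeny decomposition is available.
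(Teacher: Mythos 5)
There is a genuine gap, and it sits exactly where your proof leans hardest: the residue-realization step is false. For any correspondence $Z\subset X\times E_0$ with $\Hom(E_0,J_X)=0$, the induced homomorphism $J_{E_0}\to J_X$, $D\mapsto \pi_{2*}\pi_1^*D$, is the zero map; since the residue divisor of the trace-image of a third-kind form $\eta_0$ is precisely $\pi_{2*}\pi_1^*$ of the residue divisor of $\eta_0$, every residue divisor you can produce from your generic $E_0$ has vanishing class in $J_X(\C)\otimes_\Z\C$, i.e.\ lies in the $\C$-span of principal divisors. So $R_0$ maps only onto the kernel of $\C_0^{P}\to J_X(\C)\otimes_\Z\C$, not onto all of $\mathrm{gr}^W_2W$, and your decompositions $W=W_1\oplus R_0$ and $V=H^1_{dR}(X)_{\mathrm{ell}}\oplus R_0$ fail whenever the polar points of $\omega$ have non-torsion classes. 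The very genericity you impose on $E_0$ to kill its weight-one contribution simultaneously kills its ability to hit nontrivial residue classes. A concrete counterexample to your algorithm: let $X=E$ be elliptic and $\omega$ a third-kind differential with residues $\pm1$ at $p,q$ where $[p-q]\in E$ is non-torsion. Then $(X,\omega)$ is trivially integrable in terms of $S$ (it is itself a member of $S$), but $[\omega]$ is not in the span of trace-images of your $S_0$, which contains no third-kind forms on the isogeny factors. This also shows no finite set $S_0$ chosen independently of the polar divisor of $\omega$ can work, so the reduction to Theorem 1.3 cannot be salvaged in this form.

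This is precisely why the paper does not argue via a finite reduction. Its criterion is that $[\omega]\in H^1(X,\C)/F^1H^1(X,\C)\otimes_\Q\C$ have trivial image in $\tilde B\otimes_\Q\C$, where $J_X\sim A\times B$ with $A$ the elliptic part, and the first (weight-two) check --- whether the image of $[\omega]$ in $B\otimes_\Q\C$ vanishes --- is an arithmetic Mordell--Weil membership question about the specific points supporting the residues, decided by the paper's Lemma 7.1 (N\'eron--Tate heights, independence modulo large primes, and Masser's specialization theorem to descend from finite type fields to number fields). Your closing assessment that everything beyond the isogeny decomposition is ``routine finite-dimensional linear algebra'' is therefore exactly backwards: the Mordell--Weil computation is the second genuinely non-trivial algorithmic ingredient, and it cannot be eliminated, because the answer depends on the arithmetic of the polar points and not just on the Hodge-theoretic linear algebra. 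Two smaller points: your weight-one analysis (elliptic isotypic part of $H^1$ cut out by finitely many elliptic factors, found effectively via correspondences and the N\'eron--Severi computations of \cite{PTV}) does agree with the paper and is correct; but your initial definition of $V$ inside $W=H^1_{dR}(X\setminus P)$ silently assumes the relevant trace-images can be taken with poles confined to $P$, which needs justification --- the paper avoids this by working in the direct limit $\varinjlim_U H^1(U,\C)$ of Theorem \ref{residueisom}.
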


The plan of the paper is as follows. In \S2 
we prove Ax-Schanuel for a commutative 
complex algebraic group,
generalizing Bertrand's generalization \cite{BERTRAND}
of results of Ax \cite{AX, AXSOME} (see also Kirby 
\cite{KIRBY}). In \S3 we recall the definition 
and some properties of {\it generalized Jacobians\/}
and reformulate exceptional integrability in terms 
of differentials on curves. In \S4 we recall the 
statement of the Seidenberg Elimination Theorem.
Then in \S5 we reformulate both exceptional 
integrability and Ax-Schanuel in the setting of 
a differential field and prove a version
of Theorem 1.1 in that setting, making extensive 
use of \S4. \S6 interprets meromorphic forms on a 
curve in terms of cohomology. Theorem 1.2 is 
then proved in \S7 followed
by the description of the decision procedures.
The final section \S8 connects integrability
questions for curves with regular differentials 
in a pencil with certain problems of unlikely
intersections.

\section{Ax-Schanuel for a  connected commutative 
complex algebraic group}

Let $G$ be a connected commutative complex
algebraic group. Recall that such a group is 
an extension of a semiabelian variety by a vector 
group. Following Bertrand \cite{BERTRAND}, say 
that such a group has {\it no vectorial quotients} 
if there do not exist non-trivial maps
$G\rightarrow\G_{\rm a}$. We begin with the 
following simple (and certainly well known) lemma:

\begin{lemma}
Let $G$ be a connected commutative complex 
algebraic group. There is a canonical connected 
subgroup $H$ of $G$ without vectorial quotients, 
such that $G\cong H\times V$ for some 
(non-canonical) vector subgroup $V<G$.
\end{lemma}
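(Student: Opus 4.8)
The plan is to realize $H$ as the intersection of the kernels of all vectorial characters of $G$, and then to split off a complementary vector group using the vanishing of a suitable $\mathrm{Ext}$-group. First I would consider $N=\Hom(G,\G_{\rm a})$, the group of algebraic homomorphisms $G\to\G_{\rm a}$. Since $\G_{\rm a}$ is a $\C$-vector space, $N$ is naturally a $\C$-vector space, and differentiation at the identity embeds $N$ into the dual of $\mathrm{Lie}(G)$ (a nonzero $\phi$ is nonconstant, hence has nonzero differential), so $N$ is finite-dimensional, say of dimension $d$. A basis $\phi_1,\ldots,\phi_d$ then gives an algebraic homomorphism $e=(\phi_1,\ldots,\phi_d)\colon G\to\G_{\rm a}^d=:V'$, which I claim is surjective: its image is a connected, hence linear, subgroup of $\G_{\rm a}^d$, and a linear functional vanishing on it corresponds to an element of $N$ that vanishes identically on $G$, hence is zero. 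I set $H:=\ker e=\bigcap_{\phi\in N}\ker\phi$, which is manifestly canonical, being preserved by every automorphism of $G$ (an automorphism permutes $N$).

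The substantive step is to split the extension $0\to H\to G\to V'\to 0$, that is, to produce an algebraic section $V'\to G$; its image $V\cong\G_{\rm a}^d$ is then the desired non-canonical vector subgroup, and $G\cong H\times V$ internally, since $H\cap V=\ker(e|_V)=0$ and $H+V=G$. The extension class lies in $\mathrm{Ext}^1(V',H)=\mathrm{Ext}^1(\G_{\rm a},H)^d$, so it suffices to show $\mathrm{Ext}^1(\G_{\rm a},H)=0$. I would prove the more general vanishing $\mathrm{Ext}^1(\G_{\rm a},J)=0$ for every connected commutative complex algebraic group $J$, by dévissage along the Chevalley filtration of $J$ (semiabelian-by-vector, and torus-by-abelian-variety), reducing to the cases $J=\G_{\rm a},\G_{\rm m},A$. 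Each is classical in characteristic zero: $\mathrm{Ext}^1(\G_{\rm a},\G_{\rm a})=0$ because a commutative unipotent group is a vector group, while $\mathrm{Ext}^1(\G_{\rm a},\G_{\rm m})=\mathrm{Ext}^1(\G_{\rm a},A)=0$ because in any extension $E$ of $\G_{\rm a}$ by a torus or abelian variety the maximal connected affine subgroup meets the kernel in a finite group and therefore maps isomorphically onto $\G_{\rm a}$, giving a splitting. This dévissage, and especially the three input vanishings, is the main obstacle; it rests essentially on characteristic zero, as in positive characteristic Witt-vector and Artin--Schreier phenomena make $\mathrm{Ext}^1(\G_{\rm a},\G_{\rm a})\neq 0$.

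With the splitting in hand the remaining assertions are immediate. Connectedness of $H$ follows since $H\cong G/V$ is a homomorphic image of the connected group $G$. That $H$ has no vectorial quotients follows from its definition: any $\psi\colon H\to\G_{\rm a}$ extends to $\phi:=\psi\circ\mathrm{pr}_H\colon G\to\G_{\rm a}$, an element of $N$, but $H\subseteq\ker\phi$ by construction, so $\psi=\phi|_H=0$. The complement $V$ depends on the choice of section, unique only up to $\Hom(V',H)$, which accounts for its non-canonicity. I would close by noting that the multiplication $H\times V\to G$ is a bijective homomorphism of smooth algebraic groups and hence, in characteristic zero, an isomorphism.
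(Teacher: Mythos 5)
Your proposal is correct in substance and takes a genuinely different route from the paper's. The paper's proof is elementary and avoids homological algebra: having defined $H$ as the maximal subgroup of $G$ lying in the kernel of every map to $\G_{\rm a}$ (connected because $G$ is connected and $\G_{\rm a}$ is simply connected), it takes the surjection $\phi\colon G\to V$ with kernel $H$ and splits it \emph{inside the unipotent radical} $U$ of $G$: one has $\phi(U)=V$, since otherwise the semiabelian quotient $G/U$ would surject onto the nontrivial vector group $V/\phi(U)$, and the surjection $\phi\mid U$ of vector groups then splits by linear algebra. You replace this two-line splitting with the vanishing $\mathrm{Ext}^1(\G_{\rm a},J)=0$ for every connected commutative $J$, proved by d\'evissage along the Chevalley filtration. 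That is a correct and strictly more general statement (and your remark that it fails in characteristic $p$ via Witt vectors correctly isolates the role of characteristic zero), but it costs you the abelian-category formalism of $\mathrm{Ext}$ for commutative algebraic groups and three base cases, where the paper needs only the single fact that a semiabelian variety admits no nonconstant map to $\G_{\rm a}$. Your front end (finite-dimensionality of $N=\Hom(G,\G_{\rm a})$ via differentiation at the identity, surjectivity of $e$, canonicity of $H=\ker e$) is fine and in effect makes explicit what the paper leaves implicit in the phrase ``let $\phi:G\rightarrow V$ be a map onto a vector group such that $H=\ker\phi$''.

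Two slips need repair, though both are local. First, an ordering circularity: you prove $\mathrm{Ext}^1(\G_{\rm a},J)=0$ for \emph{connected} $J$ and apply it to $J=H$, but you deduce connectedness of $H$ only afterwards, from the splitting ($H\cong G/V$). Fix this either by proving connectedness of $H=\ker e$ directly first --- as the paper does, using simple connectedness of the base: the fibration $H\to G\to V'$ has $\pi_1(V')=0$ and $\pi_0(G)=0$, hence $\pi_0(H)=0$ --- or by extending your vanishing statement to groups with finite component group, noting $\mathrm{Ext}^1(\G_{\rm a},F)=0$ for finite $F$ since $\G_{\rm a}$ is uniquely divisible in characteristic zero. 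Second, your uniform justification of the base cases is wrong for the torus: in an extension $E$ of $\G_{\rm a}$ by $\G_{\rm m}$, the group $E$ is itself affine, so its maximal connected affine subgroup is all of $E$ and meets the kernel in $\G_{\rm m}$, which is not finite. For this case argue instead with the maximal unipotent subgroup of $E$ (equivalently, use the canonical decomposition $E\cong T\times U$ of a connected commutative affine group in characteristic zero): $U\cap\G_{\rm m}$ is trivial and $T$ maps trivially to $\G_{\rm a}$ since $\Hom(\G_{\rm m},\G_{\rm a})=0$, so $U\to\G_{\rm a}$ is an isomorphism and gives the section. Your affine-subgroup argument is correct as stated for the abelian-variety case, where the kernel is proper and so meets any affine subgroup in a finite group. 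With these two repairs your proof is complete.
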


\begin{proof}
We first establish uniqueness. Indeed, such an $H$ 
would necessarily have to be the maximal subgroup 
of $G$ which lies in the kernel of every map to 
$\G_{\rm a}$. 

To show existence, define $H$ to be the maximal 
subgroup of $G$ which lies in the kernel of every 
map to $\G_{\rm a}$. Note that since $G$ is 
connected and  $\G_{\rm a}$ is simply 
connected, $H$ must also be connected.

Next, let $\phi:G\rightarrow V$ be a map onto a 
vector group  such that $H=\ker\phi$. By replacing 
$V$ by the image of $\phi$ we may assume $\phi$ is
surjective. It remains to construct a section of 
$\phi$. 

Let $U$ be the unipotent radical of $G$. We claim
$\phi(U)=V$. Indeed, if not, we would obtain a
surjection map from the semiabelian variety $G/U$ 
to the vector group $V/\phi(U)$. Finally, as maps
between vector spaces split we may find a section 
of $\phi\mid U$, as desired.
\end{proof}

We will call $H$ the {\it maximal NVQ subgroup\/} 
of $G$. Note that, if $H_i$ are the maximal NVQ
subgroups of $G_i$, $i=1,\ldots, k$, then
$H_1\times\ldots\times H_k$ is the maximal
NVQ subgroup of $G_1\times\ldots\times G_k$.
We proceed to prove our main theorem of this section:

\begin{thm}[Ax-Schanuel for a connected commutative
complex algebraic group]
Let $H$ be a complex connected commutative algebraic 
group without vectorial quotient, and let $V$ be a 
vector group. Let $\pi:T\rightarrow H$ be the universal
cover of $H$ with graph $D$. Let 
$Z\subset V\times T\times H$ be an algebraic variety 
and $U\subset Z\cap (V\times D)$ be an analytically 
irreducible component. If $\dim U>\dim Z-\dim H$ 
then the projection of $U$ to $H$ lies in the translate 
of a proper subgroup (i.e. a proper weakly special).
\end{thm}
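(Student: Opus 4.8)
The plan is to deduce the statement from Ax's transcendence theorem for the exponential of a commutative algebraic group---in the sharper forms due to Bertrand and Kirby that allow vectorial extensions---by passing to the function field of $U$ and reading the atypicality hypothesis as a deficiency in transcendence degree. Write $n=\dim H$ and identify $T$ with the Lie algebra of $H$, so that $\pi$ is the exponential map and $D=\{(t,\exp t)\}$.

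Fix a basis $\omega_1,\dots,\omega_n$ of the translation-invariant differentials on $H$; their pullbacks under $\pi$ are the constant coordinate forms on $T$, so on $D$ (hence on $U$) the $H$-coordinates $\exp t$ admit the coordinate functions $\tau_i=t_i|_U$ as formal primitives, $\omega_i|_U=d\tau_i$. I would let $F=\C(U)$ be the field of meromorphic functions on $U$, made into a differential field with field of constants $\C$ by the derivations dual to a basis of tangent directions; the coordinates on the free factor $V$ enter $F$ as further elements subject to no exponential constraint. In this field the datum is a point of $H(F)$ together with its logarithm $\tau=(\tau_i)$, and $\operatorname{tr.deg}_\C F=\dim U$.

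Next I would translate the hypothesis. Since $U\subseteq Z$ and $U$ is an analytic component of $Z\cap(V\times D)$, the inequality $\dim U>\dim Z-\dim H$ is exactly the statement that the transcendence degree carried by $U$ falls short of the generic value predicted for the intersection, the deficiency being $\dim H-(\dim Z-\dim U)>0$. Feeding this into Ax's theorem forces the logarithms $\tau_i$ to satisfy a relation modulo constants that cannot occur when the projection $\pi(U)$ is Zariski-dense in $H$: concretely, there must be a nonzero invariant form $\omega=\sum_i c_i\omega_i$, $c_i\in\C$, whose restriction to $Y=\overline{\pi(U)}$ is exact. Here one must be careful to separate the contribution of the free coordinates on $V$ and of the unipotent part of $H$ from the genuine transcendence coming from the semiabelian quotient, so that Ax's inequality is applied to the correct subquotient; this bookkeeping is the first place the argument has to be run with care.

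The final step---and the main obstacle---is to convert the exact invariant form $\omega$ into a proper algebraic subgroup. For a vector group every invariant form is globally exact, so a naive reading of the relation is vacuous; it is precisely the hypothesis that $H$ has no vectorial quotient that prevents this degeneration. Under that hypothesis the space of invariant forms exact on $Y$ is a proper subspace, its annihilator in the Lie algebra is the Lie algebra of a proper connected algebraic subgroup $H'<H$, and the vanishing of the periods of $\omega$ along $Y$ places $\pi(U)$ in a single coset of $H'$, the asserted proper weakly special subvariety. Making this descent rigorous across the extension $0\to W\to H\to G_{\mathrm{sa}}\to 0$ of the semiabelian part $G_{\mathrm{sa}}$ by the vector group $W$---that is, checking that the relation produced by Ax's theorem is cut out by an honest subgroup rather than by a spurious direction in $W$---is where the NVQ hypothesis and the Bertrand--Kirby refinements of Ax's theorem carry the weight of the proof.
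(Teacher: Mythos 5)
Your proposal diverges from the paper at the very first step, and the divergence matters. The paper's proof is only a few lines: it quotes Bertrand's theorem as a black box for the case $\dim V=0$, and reduces to that case by a projection argument. Namely, one may assume $Z$ is the Zariski closure of $U$ (this only lowers $\dim Z$, strengthening the hypothesis); projecting to $T\times H$ gives $Z'$ and $U'$, and since the graph $D$ constrains only the $T\times H$ coordinates (the $V$-coordinates are free on $V\times D$), the generic fibre dimensions of $Z\to Z'$ and $U\to U'$ agree, so $\dim Z-\dim U=\dim Z'-\dim U'$ and the atypicality descends. You instead set out to reprove the $\dim V=0$ case from Ax's theorem, and your treatment of the $V$-factor ("further elements subject to no exponential constraint") never establishes that the defect survives whatever reduction you intend --- that is exactly what the Zariski-closure normalization and the fibre-dimension comparison are for. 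A smaller but real slip in the same part: the relevant field is the one generated by the restricted coordinate functions, whose transcendence degree is $\dim Z$ (after the normalization), not $\dim U$; the number $\dim U$ enters as the rank of the space of derivations. Your deficiency $\dim H-(\dim Z-\dim U)$ is the right quantity, but your setup misstates where it comes from.

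The fatal gap is the final step. You propose to extract from Ax a nonzero invariant form $\omega$ exact on $Y=\overline{\pi(U)}$, and then to conclude by saying the annihilator of the space of such forms "is the Lie algebra of a proper connected algebraic subgroup $H'<H$". For a general commutative algebraic group this is false: Lie subalgebras are typically not algebraic. Take $H=E_1\times E_2$ with non-isogenous elliptic curves; a generic line in $\mathrm{Lie}(H)$ is the Lie algebra of no algebraic subgroup at all, so the annihilator of a hyperplane of invariant forms need not correspond to any subgroup, and the NVQ hypothesis does not rescue this (that hypothesis rules out vectorial quotients, not non-algebraic subalgebras). The passage from a linear relation among logarithms, or an exact invariant form, to a coset of an honest algebraic subgroup is precisely the content of Ax's and Bertrand's theorems --- it is the hard part of the whole story, and your sketch assumes it; your closing sentence, deferring this to "the Bertrand--Kirby refinements", concedes that the proof is not actually given. (The logic is also inverted: those theorems output the coset conclusion directly for the projection of $U$; "some invariant form is exact on $Y$" is a weaker statement, related instead to the vectorial-quotient criterion of Lemma 3.1, and by itself does not place $Y$ in a proper coset.) The honest route is the paper's: cite Bertrand for $\dim V=0$ and reduce to it by projecting $V$ away.
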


\begin{proof}
The case where $\dim V=0$ is proven by Bertrand \cite{BERTRAND}
We shall reduce to that case. Without loss of generality we can 
assume that $Z$ is the Zariski closure of $U$.  

Let $Z'$ be the projection of $Z$ to $T\times H$ and let $U'$ be the 
projection of $U$ to $D$. Then $U'$ is a component of $Z'\cap D$. 
Now since $Z$ is the Zariski closure of $U$, the generic dimension 
of the fibers of the  projection $Z\rightarrow Z'$ is the same as the 
generic dimension of the fibers of the projection $U\rightarrow U'$. 
Thus $\dim Z-\dim U = \dim Z'-\dim U'$. The claim therefore follows 
immediately from the case where $\dim V=0$.
\end{proof}

It might seem that the above is artificial, and so we explain how to 
relate it to the usual geometric Ax-Schanuel statement.
Indeed, let $G$ be an arbitrary complex connected commutative 
algebraic group, and let $H$ be the subgroup without vectorial 
quotient provided by the lemma, so that $G=H\times V.$

Now let $T_G,T_H,T_V$ be the tangent spaces of $G,H,V$ 
respectively. The key point is that the map 
$\pi_V:T_V\rightarrow V$ is an algebraic isomorphism. 
Therefore, if we let $D_H,D_G,D_V$ be the graphs, 
then while  $D_H^{\rm zar} = T_H\times H,$ 
we have $D_V^{\rm zar}=D_V$, and therefore 
$$
D_G^{\rm zar}=
D_H^{\rm zar}\times D_V^{\rm zar}\cong T_H\times H\times V.$$

Now if we are to set up the usual Ax-Schanuel 
as $Z\subset T_G\times G$ and $U$ a component 
of $Z\cap D_G,$ then the assumption of 
$Z=U^{\rm zar}$ would entail that 
$Z\subset D_G^{\rm zar}$ and thus this is the
natural space to work in.

The projection to $H$ is non-canonical, but the
dimension of a minimal coset containing the image of
$U$ is independent of the choice of projection.
The coset can (non-canonically) be given a group
structure and we can reframe the theorem in the
following way.

\begin{cor}
Let $H$ be a complex connected commutative algebraic 
group without vectorial quotient, and let $V$ be a 
vector group. Let $\pi:T\rightarrow H$ be the universal
cover of $H$ with graph $D$. Let 
$Z\subset V\times T\times H$ be an algebraic variety 
and $U\subset Z\cap (V\times D)$ be an analytically 
irreducible component.
Let $K$ be a minimal coset containing the image of $U$
under projection to $H$. Then
$$
\dim U\le \dim Z - \dim K.
$$
\end{cor}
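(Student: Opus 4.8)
The plan is to deduce the bound from Theorem 2.2 by transporting the whole configuration into the algebraic subgroup that cuts out $K$, so that the dimension hypothesis of Theorem 2.2 becomes exactly the negation of the inequality we want. First I would reduce to the case $Z = U^{\rm zar}$: since $U \subseteq U^{\rm zar} \subseteq Z$ and $U^{\rm zar}\cap (V\times D)\subseteq Z\cap(V\times D)$, the set $U$ remains an analytically irreducible component after replacing $Z$ by $U^{\rm zar}$, and this only decreases $\dim Z$, so it suffices to treat this case.

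Next, write $K=g+H'$ with $H'$ a connected algebraic subgroup of $H$. Because $D$ is a subgroup of $T\times H$, translation by any element of $D$ preserves both $D$ and $V\times D$; choosing a lift $\tilde g\in T$ of $g$ and translating $Z,U$ by a suitable element of $V\times D$, I may assume $g=0$, so that $\pi_H(U)\subseteq H'$. Since $U$ is connected and $\pi^{-1}(H')=T'+\ker\pi$ is a disjoint union of translates of $T':=\operatorname{Lie}(H')$ by the discrete group $\ker\pi$, the $T$-coordinate of $U$ lies in a single such translate; a further translation by an element of $\ker\pi\subseteq D$ puts $U$, and hence $Z=U^{\rm zar}$, inside $V\times T'\times H'$ with no change of dimension.

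Now I would decompose $H'=H''\times V''$ with $H''$ the maximal NVQ subgroup and $V''$ a vector group (Lemma 2.1), and set $W=V\times V''$. Correspondingly $T'=T''\times T_{V''}$ and $D\cap(T'\times H')=D''\times D_{V''}$, where $D_{V''}$ is the graph of the algebraic isomorphism $T_{V''}\to V''$ and is therefore Zariski closed. On $Z$ the $(T_{V''},V'')$-coordinates lie on $D_{V''}$, so the projection forgetting the $T_{V''}$-factor is injective on $Z$ and preserves $\dim Z$; this places us in the setting of Theorem 2.2 for the NVQ group $H''$ and the vector group $W$, with $\hat U$ an analytic component of $\hat Z\cap(W\times D'')$. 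Minimality of $K$ guarantees that $\pi_{H''}(U)$ lies in no proper coset of $H''$ (otherwise $\pi_H(U)$ would lie in a proper sub-coset of $K$). Thus the exceptional alternative of Theorem 2.2 is excluded, giving $\dim U\le \dim Z-\dim H''$.

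The hard part will be the final accounting of the vector directions: the argument above charges only the dimension $\dim H''$ of the NVQ part, whereas $\dim K=\dim H''+\dim V''$. The remaining $\dim V''$ has to be recovered from a transversality statement along $V''$, using that $D_{V''}$ is an algebraic graph of dimension $\dim V''$ and that minimality of $K$ forces the image of $U$ to be non-degenerate (not contained in any proper affine subspace) in the $V''$-directions. Making this precise — and in particular ruling out configurations in which $Z$ is concentrated on the algebraic graph $D_{V''}$, where the $V''$-contribution to the minimal coset need not be matched by a drop in $\dim Z$ — is the delicate point of the whole argument; it is exactly here that the hypothesis that $K$ is a genuinely minimal coset, and not merely an enclosing one, must be exploited in full.
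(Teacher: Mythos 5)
Your reduction is, as far as one can tell, exactly the reframing the paper intends: the corollary is stated with no proof beyond the remark that ``the coset can (non-canonically) be given a group structure,'' and your steps --- reducing to $Z=U^{\rm zar}$, translating $K$ to a subgroup $H'$ by an element of $V\times D$ and a deck transformation in $\ker\pi\subset D$, splitting $H'=H''\times V''$ via Lemma 2.1, absorbing the $(T_{V''},V'')$-coordinates into the vector factor because $\pi$ is algebraic there, and using minimality of $K$ to rule out a proper coset of $H''$ --- are all correct and fill in what the paper leaves implicit. This cleanly yields $\dim U\le \dim Z-\dim H''$.

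However, the step you flag as ``the hard part'' is not merely delicate: it is impossible, and your instinct that configurations concentrated on the algebraic graph $D_{V''}$ are the obstruction is exactly right, while your hope that minimality of $K$ forces a compensating transversality is not. Take $V=0$ and $H=E^\natural$, the universal vector extension of an elliptic curve $E$: it has no vectorial quotient (any nonzero map $E^\natural\to\G_{\rm a}$ would split the extension up to isogeny, and ${\rm Ext}^1(E,\G_{\rm a})\cong H^1(E,\Oo)$ is torsion-free, while $\Hom(E,\G_{\rm a})=0$), yet it contains the vectorial subgroup $\G_{\rm a}=\ker(E^\natural\to E)$. The period lattice meets $\operatorname{Lie}(\G_{\rm a})$ trivially, so $\pi$ restricts to an algebraic isomorphism $\operatorname{Lie}(\G_{\rm a})\to\G_{\rm a}$; let $U=Z$ be its graph, which is Zariski closed. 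Then $U$ is an analytically irreducible component of $Z\cap D$, $\dim U=\dim Z=1$, and the minimal coset is $K=\G_{\rm a}$ itself, so $\dim Z-\dim K=0<\dim U$ --- even though the image of $U$ is all of $\G_{\rm a}$, hence as non-degenerate in the $V''$-direction as it can be. So no transversality statement can recover the extra $\dim V''$, and the inequality with the full $\dim K$ fails whenever the group underlying $K$ has a nontrivial vectorial part (subgroups of NVQ groups need not be NVQ, as this example shows). What your argument proves, $\dim U\le\dim Z-\dim H''$ with $H''$ the maximal NVQ subgroup of the group underlying $K$, is the correct statement --- it is sharp in the example above, it coincides with the printed corollary when $H$ is semiabelian (where all connected subgroups are NVQ), and it is all that is used downstream, since the transcendence counts in the proof of Theorem 3.2 are carried out with the NVQ dimensions $h$ and $h-\ell$ throughout.
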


\section{Ax-Schanuel for complex curves 
and differentials}

Much of the following follows \cite{SERRE}, 
but we work throughout over $\C$.

Let $X=X(\C)$ be a smooth projective complex algebraic curve,
$\Sigma\subset X$ a finite set of (distinct) points, and
${\bf m}$ a {\it modulus\/} with support $\Sigma$ (i.e. an 
assignment of a positive integer $n(P)$ to each $P\in\Sigma$).

Associated with this data is the generalized Jacobian 
$J_{\bf m}=J_{\bf m}(X)$, with
$g_{\bf m}=\dim J_{\bf m}=\dim H^0\big(X, \Omega({\bf m})\big)$.
We have a map
$$
\big(X-\Sigma\big)^{g_{\bf m}}\rightarrow J_{\bf m}
$$
sending $(x_1,\ldots, x_g)$ to the corresponding divisor class
(following the choice of a suitable base point),
and which factors through the symmetric product
$\big(X-\Sigma\big)^{(g_{\bf m})}$. The map 
$\big(X-\Sigma\big)^{(g_{\bf m})}\rightarrow J_{\bf m}$
is birational. 
If we map $X-\Sigma\rightarrow J_{\bf m}$ sending $x$ 
to the corresponding divisor class then the image generates 
$J_{\bf m}$ as a group.

Let  $T_{\bf m}$ be the lie algebra of $J_{\bf m}$,
identified with its tangent space at the origin.
Here see particularly \cite{SERRE} pages 100--101.
See also \cite{MUMFORD}, pages 46--47.
We have the map
$$
\big(X-\Sigma\big)^{g_{\bf m}}\rightarrow T_{\bf m}
$$
given by integrating the differentials in 
$H^0\big(X, \Omega({\bf m})\big)$ (again following the 
choice of a suitable base point in $X-\Sigma$), and this map
composed with the Lie exponential recovers the
divisor class map, if we stipulate that the tuple 
of base points maps to the identity.
The generalised Jacobian $J_{\bf m}$ is analytically
isomorphic to the quotient of $T_{\bf m}$ by the 
group $\Pi_{\bf m}$ of periods of the differentials.

\begin{lemma}
The group $J_{\bf m}$ has a non-trivial 
vectorial quotient if and only if there 
exists a non-trivial exact differential in
$H^0\big(X, \Omega({\bf m})\big)$. 
\end{lemma}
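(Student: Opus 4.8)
The plan is to identify homomorphisms $J_{\bf m}\to\G_{\rm a}$ with linear functionals on $T_{\bf m}$ that kill the period group, and then to recognize such functionals as exact differentials. Recall from the uniformization $J_{\bf m}\cong T_{\bf m}/\Pi_{\bf m}$ that $T_{\bf m}$ is the universal cover of $J_{\bf m}$, and that integrating the forms in $H^0(X,\Omega({\bf m}))$ identifies this space with the cotangent space $T_{\bf m}^*$: a point of $T_{\bf m}$ obtained by integration pairs with a form $\omega$ by returning $\int\omega$. Given a homomorphism $\phi:J_{\bf m}\to\G_{\rm a}$, its pullback to $T_{\bf m}$ is a holomorphic homomorphism of vector groups $(\C^{g_{\bf m}},+)\to(\C,+)$, hence $\C$-linear, and so is given by a unique $\omega_\phi\in T_{\bf m}^*=H^0(X,\Omega({\bf m}))$. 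Conversely, a functional $\omega\in T_{\bf m}^*$ descends to a homomorphism of the quotient precisely when $\omega$ vanishes on $\Pi_{\bf m}$; moreover any such analytic homomorphism between the algebraic groups $J_{\bf m}$ and $\G_{\rm a}$ is automatically algebraic. Thus $J_{\bf m}$ has a non-trivial vectorial quotient if and only if there is a non-zero $\omega\in H^0(X,\Omega({\bf m}))$ whose pairing with every element of $\Pi_{\bf m}$ vanishes.

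The second step is to note that, by construction, the pairing of $\omega$ with a period is the integral of $\omega$ over the corresponding loop in $X-\Sigma$, so that $\omega$ kills $\Pi_{\bf m}$ if and only if all periods of $\omega$ vanish. It then remains to prove that a form $\omega\in H^0(X,\Omega({\bf m}))$ has all periods zero if and only if it is exact, i.e. $\omega=df$ for a rational function $f$ on $X$. One direction is immediate: if $\omega=df$ then $\oint_\gamma df=0$ for every loop $\gamma$, since $f$ is single-valued. For the converse, I would fix a base point in $X-\Sigma$ and define $f(x)=\int^x\omega$; the vanishing of all periods makes $f$ well-defined and single-valued on $X-\Sigma$, and holomorphic there. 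Since the small loops around the points of $\Sigma$ lie in $\Pi_{\bf m}$, their periods $2\pi i\,\mathrm{Res}_P\omega$ vanish, so $\omega$ has no residues; consequently the local primitive at each $P\in\Sigma$ has no logarithmic term and is meromorphic. Hence $f$ extends to a meromorphic, i.e. rational, function on $X$ with $\omega=df$.

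The main obstacle is the forward direction of this last equivalence, and in particular verifying that the relevant loops genuinely contribute periods in $\Pi_{\bf m}$: I need that $\Pi_{\bf m}$ is the image of $H_1(X-\Sigma,\Z)$ (suitably interpreted for a non-reduced modulus) under the period map, so that both the homology cycles of $X$ and the small loops encircling the points of $\Sigma$ are accounted for. Granting this, the vanishing of periods around $\Sigma$ supplies exactly the residue vanishing needed to guarantee that the primitive is genuinely rational rather than merely multivalued with logarithmic singularities; this is the one place where the structure of the generalized Jacobian (its linear part, governed by the modulus) enters, and it is what I would check carefully against \cite{SERRE}, pages 100--101.
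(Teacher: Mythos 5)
Your proposal is correct and follows essentially the same route as the paper: both identify vectorial quotients via the uniformization $J_{\bf m}\cong T_{\bf m}/\Pi_{\bf m}$ with non-zero elements of $H^0\big(X,\Omega({\bf m})\big)$ vanishing on the period group, and then conclude exactness by global integration. Your residue argument simply fills in the detail behind the paper's terse ``we can integrate it globally and therefore it's exact,'' and the fact you flag as the main obstacle --- that $\Pi_{\bf m}$ consists of the periods over cycles in $X-\Sigma$, including the small loops around $\Sigma$ --- is exactly the standard description from \cite{SERRE} that the paper asserts in the setup preceding the lemma.
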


\begin{proof}
Let $V=H^0\big(X, \Omega({\bf m})\big)$.
We have  $J_m(\mathbb{C}) = V^\vee/\Pi$ where $\Pi$ 
is the group of periods. Then
$J_{\bf m}$ having a non-trivial vectorial quotient 
is the same as saying the complex span of $\Pi$ 
is not all of $V^\vee$, which is the same as 
saying there is a non-zero $v\in V$ which vanishes 
on $\Pi$, which is the same as saying there is a
non-zero differential whose periods vanish. 
But given such a differential, we can integrate 
it globally and therefore its exact. 
\end{proof}

Further, the dimension of the largest vectorial
quotient is equal to the dimension of the subspace
$E_{\bf m}$ of $H^0\big(X, \Omega( {\bf m})\big)$ 
of exact ${\bf m}$-differentials. Let $H_{\bf m}$ 
denote the maximal NVQ subgroup of $J_{\bf m}$, 
of dimension $h_{\bf m}$.

Now we consider a finite sequence of pairs
$(X_1, \omega_1), \ldots, (X_k, \omega_k)$ 
where $X_i$ is a smooth projective complex 
curve and $\omega_i$ is a 
non-zero meromorphic differential. 
For $i=1,\ldots, k$, let ${\bf m}_i$ be a modulus 
on $X_i$ with 
$\omega_i \in H^0\big(X_i, \Omega({\bf m}_i)\big)$.
We adopt the above notation, but replace subscripts 
${\bf m}_i$ by $i$. Thus we have the corresponding
generalised Jacobians $J_i=J_{{\bf m}_i}(X_i)$, of
dimension $g_i$, their maximal NVQ subgroups 
$H_i=H_{{\bf m}_i}$, etc.

Let $G=J_1\times\ldots\times J_k$ be the product 
of generalised Jacobians.
Let $T=T_1\times\ldots\times T_k$ its Lie algebra.
Then  $H=H_1\times\ldots\times H_k$ is the 
maximal NVQ subgroup of $G$, of dimension $h=\sum h_i$.

We choose a basis  $\omega_{ij}, j=1, \ldots ,g_i$ 
of $H^0\big(X_i, \Omega({\bf m}_i)\big)$, 
with $\omega_i=\omega_{i1}$. 
Pick base points and define locally
$$
z_{ij}=\int^{x_i}\omega_{ij}, \quad 
i=0,\ldots, k,\quad j=1,\ldots, g_i
$$
where $x_i\in X_i(\C)$. 
We will also write $z_i=z_{i1}$.

By a {\it locus\/} in a variety (or complex space) 
$W$ we will mean a regular map 
$w: \Delta \rightarrow W$ on
some open disk in the complex plane. 
If we have loci $x_i: \Delta \rightarrow X_i-\Sigma_i$
for $i=1,\ldots, k$ then the divisor class map to
the generalized Jacobian, and integration, give a locus
$$
(x,z): \Delta \rightarrow G \times T.
$$
Here $x=(x_1,\ldots, x_k)$ while 
$z=(\overline{Z_1},\ldots, \overline{Z_k})$
with $\overline{Z}_i=(z_{i1},\ldots, z_{i {g_i}})$.
Let $U$ denote the image of the locus and $Z$ its
Zariski closure. Then
$$
\dim Z= {\rm tr. deg.}\big({\C}
(x_i,z_{ij}, i=1,\ldots, k, j=1,\ldots, g_i)/\C\big).
$$

Later we will assume that each
$\omega_i$ is not exact. It will then
be convenient to choose a basis $\omega_{ij}$
of $H^0\big(X_i, \Omega({\bf m}_i)\big)$ in
such a way that 
$\omega_{i1},\ldots, \omega_{i{h_i}}$
is a basis of 
$H^0\big(X_i, \Omega({\bf m}_i)\big)/E_{i}$.
With such a choice, we note that the condition 
that the  projection $U$ to $H$ lies in a 
coset of a proper subgroup is equivalent 
to the existence of
constants  $c_{ij}$, not all zero,
primitives $\gamma_i$ of 
exact ${\bf m}_i$-differentials for 
$i=1,\ldots, k$, and a constant $c$, such that
$$
\sum_{i=1}^k\sum_{j=1}^{h_i} c_{ij}z_{ij}+\sum_{i=1}^k \gamma_i=c.
$$
where this corresponds to a defining relation of 
a coset of an irreducible algebraic subgroup of $H$. 
Thus the linear relations given by the theorems
are of a particular type, corresponding
to suitable ``bi-algebraic'' varieties under the
uniformisation of the group $G$.

\begin{thm}[Ax-Schanuel Theorem for complex curves
and differentials]\label{thm:complex-axs}

Let $(x,z)$ be a locus in $G\times T$ 
with image $U$ as above with each $x_i$ non-constant.
Suppose that
$$
{\rm tr. deg.}\big({\C}(x_1,\ldots, x_k, z_1,\ldots, z_k)/\C\big)\le k
$$
Then there is a non-empty subset
$I\subset\{1,\ldots, k\}$,
non-zero complex constants $c_i$, and 
primitives $\gamma_i$ of exact 
${\bf m}_i$-differentials on $X_i$ for 
$i\in I$, such that, as functions of $t\in\Delta$,
$$
\sum_{i\in I} c_i(z_i+\gamma_i)=0.
$$
(Note that the degree of $\gamma_i$ is bounded in terms of  $X_i, {\bf m}_i$). Further, this relation corresponds
to a defining relation of a coset of an algebraic
subgroup of $H$.

Suppose $I$ above has 
$\hash I=\ell$ and rename the 
corresponding curves and differentials as
$(Y_i, \eta_i), i=1,\ldots, \ell$.
Then there exists an algebraic curve in 
$Y_1 \times\ldots\times Y_\ell$,
dominant to each factor, such that, writing
the generic point as $(\xi_1, \ldots, \xi_\ell)$, 
and writing $\zeta_i=\int^{\xi_i}\eta_i$, 
$i=1,\ldots, \ell$, we have
$$
\sum_{i=1}^\ell c_i\zeta_i=\gamma(\xi_1)
$$
with the same complex constants $c_i$ as above
and an algebraic function $\gamma(\xi_1)$. 

\end{thm}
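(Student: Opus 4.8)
The plan is to deduce this theorem from the abstract Ax-Schanuel statement (Theorem 2.2) applied to the product of generalized Jacobians $G = J_1 \times \cdots \times J_k$ and its maximal NVQ subgroup $H = H_1 \times \cdots \times H_k$. First I would set up the locus $(x,z) \colon \Delta \to G \times T$ and its image $U$, with Zariski closure $Z$, exactly as constructed above. The transcendence hypothesis ${\rm tr.deg.}(\C(x_1,\ldots,x_k,z_1,\ldots,z_k)/\C) \le k$ must be converted into the dimension inequality $\dim U > \dim Z - \dim H$ that triggers the exceptional case of the theorem. Since each $x_i$ is non-constant, the image $U$ has dimension at least $k$ (the $x_i$ contribute $k$ independent directions, as each $z_i$ is a non-constant primitive along the locus), so $\dim U \ge k$, while the hypothesis forces the transcendence degree of the subfield generated by just the $x_i$ and the $z_i = z_{i1}$ to be at most $k$. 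The point is that the full $Z$ lives in $G \times T$, whose relevant dimension is controlled by $\dim H$; I would check that the bound $\dim U > \dim Z - \dim H$ holds, using that the $z_{ij}$ for $j > h_i$ (coming from exact differentials) do not contribute to the NVQ part.

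Once the exceptional case applies, Theorem 2.2 gives that the projection of $U$ to $H$ lies in a translate of a proper algebraic subgroup. Using the explicit description recorded just before the theorem statement — that such a coset relation is equivalent to the existence of constants $c_{ij}$ (not all zero), primitives $\gamma_i$ of exact ${\bf m}_i$-differentials, and a constant $c$ with $\sum_i \sum_j c_{ij} z_{ij} + \sum_i \gamma_i = c$ — I would extract a nontrivial linear relation. The work here is to reduce this general relation to one involving only the distinguished $z_i = z_{i1}$, i.e. to show the relation can be taken supported on the first basis differential of each curve after possibly enlarging or recombining. Collecting the indices $i$ for which the coefficient is nonzero gives the subset $I$ and the constants $c_i$; differentiating the relation $\sum_{i \in I} c_i(z_i + \gamma_i) = c$ and absorbing the constant yields the stated form $\sum_{i \in I} c_i(z_i + \gamma_i) = 0$ up to choice of base points. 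The degree bound on $\gamma_i$ follows since $\gamma_i$ is a primitive of an exact differential in the finite-dimensional space $E_i$.

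For the second half, I would translate the group-subgroup relation into the geometric statement about a curve in $Y_1 \times \cdots \times Y_\ell$. Rename the curves in $I$ as $(Y_i,\eta_i)$. The locus $(x_1,\ldots,x_k)$ projects to a locus in $Y_1 \times \cdots \times Y_\ell$; taking its Zariski closure gives an algebraic variety, and I would argue that because each $z_i$ is non-constant and the relation couples all of them, this variety contains a curve dominant to each factor — this is where the hypothesis that each $x_i$ is non-constant is essential, guaranteeing each projection is surjective. Writing the generic point as $(\xi_1,\ldots,\xi_\ell)$ and setting $\zeta_i = \int^{\xi_i}\eta_i$, the analytic relation continues to hold by analytic continuation along the curve, giving $\sum_{i=1}^\ell c_i \zeta_i = \gamma(\xi_1)$, where the primitives $\gamma_i$ of exact differentials combine into a single algebraic function $\gamma(\xi_1)$ (algebraic because a primitive of an exact algebraic differential is an algebraic function, and everything is pulled back along the correspondence to $Y_1$).

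The main obstacle I anticipate is the dimension bookkeeping in the first step: carefully verifying that the transcendence hypothesis on the $(x_i, z_i)$ alone produces the inequality $\dim U > \dim Z - \dim H$ for the full locus including all the auxiliary coordinates $z_{ij}$. One must separate the contribution of the exact-differential directions (which lie in the vector part and do not constrain the NVQ quotient) from the NVQ directions, and confirm that the saving in transcendence degree is measured precisely against $\dim H$ rather than $\dim G$. A secondary difficulty is ensuring that the extracted relation is genuinely supported on the distinguished differentials $\omega_i = \omega_{i1}$ and that the resulting index set $I$ is non-empty, which requires that not all the $c_{i1}$ can vanish given the transcendence constraint.
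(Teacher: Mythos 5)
Your overall strategy is the paper's: apply Theorem 2.2 to $G = J_1\times\cdots\times J_k$ with maximal NVQ subgroup $H$, extract a coset relation, and then geometrize it. But there is a genuine gap exactly at the point you flag as a ``secondary difficulty'': Theorem 2.2 only gives a relation $\sum_{i,j} c_{ij} z_{ij} + \sum_i \gamma_i = c$ among \emph{all} the basis integrals $z_{ij}$, and a priori this relation may involve none of the distinguished $z_i = z_{i1}$ (the hypothesis bounds only the transcendence degree of the $x_i$ and $z_{i1}$, so nothing prevents the exceptional relation from living entirely among the auxiliary coordinates). You say the relation ``can be taken supported on the first basis differential of each curve after possibly enlarging or recombining,'' but no amount of recombining a single relation achieves this. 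The paper closes this gap with a second, essentially separate application of Ax-Schanuel: let $K$ be a \emph{minimal} coset containing the projection of $U$ to $H$, of codimension $\ell$, so the $z_{ij}$ satisfy $\ell$ independent relations; if $z_{11},\ldots,z_{k1}$ were linearly independent modulo primitives of exact ${\bf m}_i$-differentials, one could complete them to a basis of size $h-\ell$ of the $z_{ij}$ modulo these relations using $h-\ell-k$ further $z_{ij}$, whence ${\rm tr.deg.}\le k+(h-\ell-k)=h-\ell=\dim K$; applying the theorem again to $K\times V$ then forces the locus into a proper coset of $K$, contradicting minimality. This contradiction is what produces a dependence among the $z_{i1}$ themselves, and it makes $I$ nonempty automatically since the dependence is nontrivial. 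Without this minimality argument your proof does not reach the stated conclusion.

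Separately, your dimension bookkeeping in the first step is wrong as stated: $U$ is the image of a holomorphic map from a disk, so $\dim U = 1$, not $\dim U \ge k$; the $x_i$ are coordinates of a single one-dimensional locus, not $k$ independent directions of $U$. The correct count (and the paper's) is on the other side of the inequality: the hypothesis gives ${\rm tr.deg.}$ of the $x_i, z_{i1}$ at most $k$; adjoining the $h-k$ further integrals $z_{ij}$, $2\le j\le h_i$, of a basis modulo exact differentials raises this to at most $h$; and the remaining $z_{ij}$ are primitives of exact differentials, hence algebraic over the $x_i$. So $\dim Z \le h = \dim H$ and $\dim U = 1 > 0 \ge \dim Z - \dim H$, which is what triggers Theorem 2.2. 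Your sketch of the second half (Zariski closure of the projected locus, a curve in it dominant to each factor, persistence of the relation by continuation) is in the spirit of the paper, though the paper additionally proves that each $x_i$, $i\in I$, is algebraic over the remaining $x_j$, $j\in I$ --- via the argument fixing points $y_j$ and considering the coset $K_i\subset J_i$ --- a mutual-dependence property that is needed downstream in the proof of Theorem 3.3 and which your version does not supply.
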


\begin{proof}
If some $\omega_i$ is exact then we immediately
get the conclusion with $I=\{i\}$. 
So we assume that all the $\omega_i$ are inexact. 

Then we can complete each $\omega_{i1}$ to a basis  
$H^0\big(X_i, \Omega({\bf m}_i)\big)/E_i$ by 
adjoining a further $h_i-1$ of the $\omega_{ij}$. 
The remaining $z_{ij}$ are algebraic over these 
and the $x_i$. We thus find
$$
{\rm tr. deg.}
\big({\C}(x_i, z_{ij})/\C\big)\le h.
$$

By Ax-Schanuel, since $\dim U=1$, the locus $U$ 
projects into a proper coset in $H$.
As noted above, this is equivalent to the $z_{ij}$ 
satisfying a linear relation of a specific type: 
there exist constants  $c_{ij}$, not all zero,
primitives $\gamma_i$ of 
exact ${\bf m}_i$-differentials for 
$i=1,\ldots, k$, and a constant $c$, such that
$$
\sum_{i=1}^k\sum_{j=1}^{h_i} c_{ij}z_{ij}+\sum_{i=1}^k \gamma_i=c,
$$
where this relation corresponds to a defining 
relation of a coset in $H$.

Let $K$ be a minimal such proper coset, whose
codimension in $H$ is $\ell$.
Then the $z_{ij}$ satisfy $\ell$ independent 
relations as above (meaning that the vectors 
$(c_{ij})$ are linearly independent over $\C$). 

Let us suppose that  $z_{11},\ldots, z_{k1}$ are 
linearly independent modulo primitives of exact
${\bf m}_i$-differentials. Then we can complete 
$z_{11},\ldots, z_{k1}$ to a basis of
the $z_{ij}$ modulo such relations of size 
$h-\ell$, where $h=\sum_{i=1}^k h_i$, by including a 
further $h-\ell-k$ of the $z_{ij}$. 

We now apply Ax-Schanuel to the group 
$G'=K\times V$ and its
Lie algebra $L'\times M$. We have $\dim U=1$ and
$$
{\rm tr. deg.}\big(\C(x_1,\ldots, x_k, z_{11},
\ldots, z_{kg_{k}}/\C\big) \le h-\ell
$$
and we conclude that the locus projects into 
a proper coset in $K$. This contradicts the 
mimimality of $K$, and we conclude that
$z_1,\ldots, z_k$ cannot be linearly independent 
modulo primitives of exact ${\bf m}_i$-differentials.
Therefore  we get some relation $R$ 
of the required form.
The degree of $\gamma_i$ is bounded, once $X_i$
and ${\bf m}_i$ are specified, as this fixes
the spaces of exact differentials.

Let $I$ be the set of indices for which $c_i$ 
in $R$ is non-zero. Now $G'$ is dominant to each 
factor $J_i, i=1,\ldots, k$, as $x_i$, being 
non-constant, is a generic point of $X_i$ over 
$\C$ and $X_i$ generates $J_i$ as a group. 
Fix $i\in I$ and fix points $y_j\in X_j$ for 
$I\ni j \ne i$. Consider the coset $K_i\subset J_i$ 
of points $y_i$ such that $\overline{y}\in G'$.
If $K_i$ contains $X_i$ then it contains all $J_i$. 
But this is impossible, as fixing $y_j, j\ne i$ 
entails a restriction on $z_i$. Thus we see that, 
for each $i\in I$, $x_i$ is algebraically 
dependent on $\{x_j: j\in  I, j \ne i\}$. 

Now taking $I$ as above, and renumbering, we can 
take an algebraic curve $Z$ in $G'\cap Y$,
where $Y=\prod Y_i$, such that each $Y_i$ 
coordinate is algebraic over each other $Y_j$.
\end{proof}

Now we consider exceptional integrability. 
Suppose that $(X_0, \omega_0)$ and
$(X_1,\omega_1),\ldots,(X_k, \omega_k)$
are smooth projective complex curves with 
meromorphic differentials as above. We keep all 
the above notation, in particular we have
moduli ${\bf m}_i$ on $X_i$ such that
$\omega_i\in H^0\big(X_i, \Omega({\bf m}_i)\big)$,
$i=0,\ldots, k$.

\begin{thm}[Exceptional integrability for 
complex curves, differentials] \label{thm:ei}
%
%
%
%
%
%
%
%
%
%
%
%
%
%
%
%
%
%
Suppose that $(X_0, \omega_0)$ is 
$((X_1, \omega_1),\ldots, (X_k, \omega_k))$-strictly
integrable. Then (after possibly replacing 
$((X_1, \omega_1),\ldots, (X_k, \omega_k))$ with a 
subsequence) there exists a curve 
$$
Z\subset X_0\times X_1\times\ldots\times X_k,
$$
irreducible and dominant to each factor such that,
writing $(\xi_0, \xi_1,\ldots, \xi_k)$ for a  
point of $Z$ avoiding the supports of
all the ${\bf m}_i$, and $z_0=\int^{\xi_0}\omega_0$,
$z_i=\int^{\xi_i}\omega_i$,
there exist constants $c_i$ and primitives 
$\gamma_i(\xi_i)$ of exact ${\bf m}_i$-differentials
such that, locally,
$$
z_0=\sum_{i=1}^k c_i z_i+\gamma_i.
$$

\end{thm}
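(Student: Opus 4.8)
The plan is to deduce this from the Ax--Schanuel theorem for complex curves and differentials (Theorem \ref{thm:complex-axs}), applied to the augmented family $(X_0,\omega_0),(X_1,\omega_1),\ldots,(X_k,\omega_k)$ of $k+1$ pairs. First I would dispose of degenerate cases: if $\omega_0$ is exact then $z_0$ is already an algebraic function and the statement is trivial, so assume $\omega_0$ is inexact; likewise discard any $(X_i,\omega_i)$ whose chosen locus $x_i$ is constant (it contributes only a constant to the ambient field) or whose $\omega_i$ is exact (it contributes only an algebraic function). Unwinding the definition of strict integrability, I obtain non-constant loci $x_i$ on $X_i$ with $z_i=\int^{x_i}\omega_i$, such that $z_0$ is algebraic over $F_0:=\C(x_0,x_1,z_1,\ldots,x_k,z_k)$ by condition 3, and, crucially, $\mathrm{tr.deg}(F_0/\C)\le k+1$: the non-constant anchor $x_0$ contributes $1$, and condition 2 forces each successive pair $(x_i,z_i)$ to contribute at most $1$, since one of $x_i,z_i$ is algebraic over the field generated by the earlier data.

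Before applying Theorem \ref{thm:complex-axs} I would shrink the family to a minimal one, replacing $((X_1,\omega_1),\ldots,(X_k,\omega_k))$ by a sub-tuple that is minimal among those in terms of which $(X_0,\omega_0)$ remains strictly integrable; this is the device that guarantees the eventual relation involves $z_0$. Now apply Theorem \ref{thm:complex-axs} to the $k+1$ pairs. Its transcendence-degree hypothesis holds: since $z_0$ is algebraic over $F_0$ and $\mathrm{tr.deg}(F_0/\C)\le k+1$, the field generated by $x_0,\ldots,x_k$ together with the integrals $z_0,\ldots,z_k$ has transcendence degree at most $k+1$. The theorem then yields a non-empty subset $I\subseteq\{0,1,\ldots,k\}$, non-zero constants $c_i$ and primitives $\gamma_i$ of exact $\mathbf m_i$-differentials with $\sum_{i\in I}c_i(z_i+\gamma_i)=0$, together with an irreducible curve in $\prod_{i\in I}X_i$, dominant to each factor, realizing this relation at its generic point.

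The key point is to show $0\in I$. If $0\notin I$, the relation is a non-trivial dependence among $z_1,\ldots,z_k$ modulo primitives of exact differentials; fixing any $j\in I$, the accompanying curve (being one-dimensional and dominant to each factor) makes $x_j$ algebraic over $\{x_i:i\in I,\ i\ne j\}$, while the relation itself makes $z_j$ algebraic over $\{x_i,z_i:i\in I,\ i\ne j\}$. Substituting these expressions into the data witnessing strict integrability of $z_0$ shows that $(X_0,\omega_0)$ is already strictly integrable in terms of the family with $(X_j,\omega_j)$ deleted, contradicting minimality. Hence $c_0\ne0$. Rescaling so that the coefficient of $z_0$ is $1$ and isolating $z_0$ gives $z_0=\sum_{i\in I\setminus\{0\}}(-c_i/c_0)\,z_i+\gamma$, where $\gamma$ collects $-\gamma_0$ and the terms $-(c_i/c_0)\gamma_i$; since each $\gamma_i$ is a primitive of an exact differential, hence algebraic, the curve $Z$ supplied by Theorem \ref{thm:complex-axs} (which, as $0\in I$, lies in $X_0\times\cdots$ and is dominant to each factor) carries $\gamma$ as a primitive of an exact differential of the required shape. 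Replacing the family by the sub-tuple indexed by $I\setminus\{0\}$, as the statement permits, yields the asserted local identity.

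I expect the main obstacle to be exactly the step forcing $0\in I$. The minimality device reduces it to showing that a relation not involving $z_0$ makes some $(X_j,\omega_j)$ genuinely redundant for the strict integrability of $(X_0,\omega_0)$; the technical heart is verifying that deleting such a member preserves strict integrability in the precise sense of the definition --- re-establishing the ordered algebraicity conditions and the transcendence-degree bound for the smaller family --- so that minimality can legitimately be invoked. A secondary point requiring care is the bookkeeping of the exact-differential primitives, in particular absorbing the term $\gamma_0$ on $X_0$ when rearranging the Ax--Schanuel relation into the stated form $z_0=\sum_{i=1}^k c_iz_i+\gamma_i$.
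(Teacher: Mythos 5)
Your overall route coincides with the paper's: dispose of exact $\omega_0$, of constant loci, and of exact $\omega_i$; observe that the tower conditions give $\mathrm{tr.deg}\,\C(\xi_0,z_0,\xi_1,z_1,\ldots,\xi_k,z_k)\le k+1$; apply Theorem~\ref{thm:complex-axs} to the $k+1$ pairs; force $0\in I$ by pruning the family; then rearrange the relation and take the curve supplied by Theorem~\ref{thm:complex-axs}. The only organizational difference is that the paper deletes one pair at a time and re-applies Theorem~\ref{thm:complex-axs}, while you pass to a minimal sub-tuple once; these are interchangeable. Your transcendence count and the bookkeeping rescaling $\gamma_0$ and the $\gamma_i$ into primitives of exact ${\bf m}_i$-differentials are both fine.

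However, the step you yourself flag as the ``technical heart'' is a genuine gap, and as written it would fail: when $0\notin I$ you propose deleting \emph{any} $j\in I$. The relation and the accompanying curve make $\xi_j,z_j$ algebraic over $\{\xi_i,z_i: i\in I,\ i\ne j\}$, but this set may contain indices \emph{larger} than $j$, i.e.\ elements later in the tower, and then deletion of pair $j$ breaks condition~2 of the definition at intermediate stages. Concretely, if $I=\{1,3\}$ in a tower $(\xi_1,z_1),(\xi_2,z_2),(\xi_3,z_3)$, deleting pair $1$ would require $\xi_2$ or $z_2$ to be algebraic over $\C(\xi_0)$ alone, which need not hold: the relation only expresses $\xi_1,z_1$ through $\xi_3,z_3$, which come \emph{after} stage $2$. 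The paper's fix, which is the one idea missing from your write-up, is to take $j=\max I$: then every other index of $I$ is strictly earlier in the tower, so $\xi_j,z_j$ are algebraic over strictly earlier data, and by transitivity of algebraicity every later condition (and the final algebraicity of $z_0$) survives the deletion. With that choice your minimality device is legitimately invoked and the remainder of your argument goes through exactly as in the paper.
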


\begin{proof} If $\omega_0$ is exact the conclusion
is immediate. So we assume it is inexact.
If some $\xi_i$ is constant then $\xi_i, z_i$ 
are algebraic over $\C$, and can be removed 
from the sequence  preserving the integrability.
So we may assume all $\xi_i$ are non-constant.
If $\omega_i$ is exact for some 
$i\in\{1,\ldots, k\}$ then both $\xi_i, z_i$ are 
algebraic over previous 
and we can omit them from the sequence.
So we may assume all $\omega_i$ are inexact.

The transcendence degree of 
$\C(x_0, \xi_1, z_1,\ldots, \xi_k, z_k, z_0)$
over $\C$ is at most $k+1$. Thus, by Theorem 3.2, 
we get a non-empty set of indices 
$I\subset\{0, 1, \ldots, k\}$ with the properties
given there.

Suppose $I$ does not include the index 0. Then, 
taking the highest index $i\in I$, we see that $z_i$ 
and $\xi_i$ are both algebraic over the 
$\{\xi_j, z_j, j\in I, j\ne i\}$. So we can omit 
$\xi_i, z_i$ from the sequence.

So eventually we find that $I$ does include $0$. 
Then renumbering the curves and taking the
curve in the product, dominant to each factor, as 
in Theorem 3.2, gives the required conclusion.
\end{proof}

Again, the linear relation corresponds to a 
defining relation
of a coset of an irreducible algebraic subgroup
of the maxinal NVQ subgroup $H$ of $G$.

The idea will be to leverage the above theorem 
into a general statement in a differential field 
(and with rationality consequences) using the 
Seidenberg Elimination Theorem together with 
the Seidenberg Embedding Theorem.

\section{The Seidenberg Elimination Theorem}

Let $K$ be a differential field with commuting derivations 
$D_1,\ldots, D_m$ and constant field $C$.
The differential polynomial ring $K\{U_1,\ldots, U_n\}$ in $n$
differential indeterminates $U_i$ is the differential ring obtained
by the adjunction of the $U_i$. Each differential polynomial
$f\in K\{U_1,\ldots, U_n\}$ gives rise to a function $f: K^n\rightarrow K$
by substitution.

\begin{thm}[Seidenberg Elimination Theorem \cite{SET}, 
as in \S1.6 of \cite{BUIUMCASSIDY}]
Given a finite system
$$
f_1(a_1,\ldots, a_N, y_1,\ldots, y_n)=0, \ldots, 
f_s(a_1,\ldots, a_N, y_1,\ldots, y_n)=0,\leqno{1}
$$
$$
g(a_1,\ldots, a_N, y_1,\ldots, y_n)\ne 0,
$$
where 
$f_1,\ldots, f_s, g\in\Z\{a_1,\ldots, a_N, y_1,\ldots, y_n\}$, 
there are a finite number of finite systems
$$
f_{i1}(a_1,\ldots, a_N)=0, \ldots, f_{is_{i}}(a_1,\ldots, a_N)=0, \quad
g_{i}(a_1,\ldots, a_N)\ne 0, \leqno{1'}
$$
where $f_{ij}, g_i\in \Z\{a_1,\ldots, a_N\}$,
such that for any differential field $K$, 
and for any $\overline{a}_1,\ldots, \overline{a}_N\in K$,
the system
$$
f_1(\overline{a}_1,\ldots, \overline{a}_N, y_1,\ldots, y_n)=0, \ldots, 
f_s(\overline{a}_1,\ldots, \overline{a}_N, y_1,\ldots, y_n)=0,\leqno{\overline{1}}
$$
$$
g(\overline{a}_1,\ldots, \overline{a}_N, y_1,\ldots, y_n)\ne 0
$$
has a solution in some differential extension field of $K$ if and only if
there is an $i$ such that 
$$
(\overline{a}_1,\ldots, \overline{a}_N)
$$
solves ${1}'$. Further, there is an algorithm that, given the system $1$,
produces the finite systems ${1}'$.\ \qed
\end{thm}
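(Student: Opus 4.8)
The plan is to read this as an effective quantifier-elimination statement for the first-order theory of differential fields, and to obtain it constructively from differential elimination theory. The semantic content is the following: for a fixed tuple $\bar a\in K$, the existential formula $\exists y_1\cdots\exists y_n\,(\bigwedge_i f_i(\bar a,\bar y)=0 \wedge g(\bar a,\bar y)\ne 0)$ has a witness in \emph{some} differential extension of $K$ if and only if it has a witness in a differentially closed field containing $K$. Indeed, any extension $L\supseteq K$ carrying a solution embeds over $K$ into a differentially closed overfield $\mathcal U\supseteq K$, into which the solution is then transported; conversely a differentially closed overfield is itself an extension. Thus it suffices to eliminate the existential quantifiers modulo the theory $\mathrm{DCF}_0$ and to check that the resulting quantifier-free condition on $\bar a$ can be produced by an algorithm. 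A quantifier-free condition is a Boolean combination of differential-polynomial equations and inequations in $\bar a$; put in disjunctive normal form, its disjuncts are exactly the systems $1'$.

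First I would dispose of the inequation by Rabinowitsch's device: adjoin one further differential indeterminate $w$ and replace $g\ne 0$ by the equation $g\,w-1=0$, so that the input becomes a pure system of differential equations in $n+1$ unknowns. I would then eliminate the unknowns one at a time, reducing by induction to the problem of eliminating a single differential variable $y$ from a finite family of differential polynomials whose coefficients are differential polynomials in $\bar a$.

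The engine for single-variable elimination is Ritt--Kolchin characteristic-set theory. Fixing a ranking on the derivatives of $y$, each differential polynomial acquires a leader, an initial, and a separant; pseudo-reduction against a polynomial of lowest rank replaces the family by one of strictly smaller rank while recording the relevant initials and separants as side conditions. Iterating this, and branching according to whether each initial or separant vanishes, yields Ritt's decomposition of the differential variety into finitely many components, each presented by a characteristic set together with the nonvanishing of its initials and separants. Termination and finiteness of the branching are guaranteed by the Ritt--Raudenbush basis theorem, which forbids infinite strictly ascending chains of radical differential ideals. On each branch $y$ has been eliminated, leaving a system of equations and inequations in $\bar a$ alone; solvability of the original system in a differentially closed overfield is then equivalent, branch by branch, to $\bar a$ satisfying one of these systems, via the differential Nullstellensatz --- a component is nonempty over a differentially closed field precisely when $1$ does not lie in the corresponding radical differential ideal, which the characteristic set detects through reduction with its initials and separants inverted.

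I expect the main obstacle to be uniformity in $K$ together with the specialization behaviour of the coefficients. The decomposition and the Nullstellensatz are most naturally stated generically, treating the $a_i$ as differential indeterminates and working over the differential fraction field of $\Q\{a_1,\ldots,a_N\}$; what must be controlled is that every specialization of $\bar a$ to a tuple $(\overline a_1,\ldots,\overline a_N)\in K$ falls into exactly one branch, and that the branch's consistency condition is faithful to solvability of the specialized system in an extension of $K$. This is precisely why the initials and separants must be carried along as explicit inequations $g_i\ne 0$: a specialization that annihilates a leading coefficient drops the system to a lower-rank stratum, and the branching must anticipate every such degeneration so that the finite disjunction $1'$ is simultaneously valid for all differential fields $K$ and all parameter values. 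Verifying that the finitely many branches exhaust all specializations, and that each is effectively computable from the input over $\Z$, is the technical heart of the argument.
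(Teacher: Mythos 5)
The paper offers no proof of this statement: it is imported as a black box, with a terminal \qed, from Seidenberg's elimination theory \cite{SET} in the form given in \S1.6 of \cite{BUIUMCASSIDY}, and is then applied in \S5 and \S7. So your proposal cannot be compared against an argument in the paper itself; judged on its own terms, it is a viable and essentially standard modern route, genuinely different in flavour from Seidenberg's original one. Seidenberg eliminates one letter at a time by direct manipulations inside differential polynomial algebra, uniformly in the parameters, with no model theory and no Nullstellensatz; you instead reduce the semantic content to effective quantifier elimination over differentially closed fields (via the correct observations that solvability in some differential extension of $K$ is equivalent to solvability in a differentially closed extension, and that a quantifier-free condition over $\Z$ is absolute between $K$ and any extension), and you implement the elimination by the Rabinowitsch trick plus Ritt--Kolchin characteristic sets with branching on initials and separants --- in effect the Rosenfeld--Gr\"obner algorithm, whose explicit, branch-by-branch output is also what delivers the algorithmic clause, more directly than abstract quantifier elimination would. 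Three caveats specific to the setting of the paper's \S4. First, the theorem is stated for $m$ commuting derivations, so you need the model companion $\mathrm{DCF}_{0,m}$ (McGrail) rather than $\mathrm{DCF}_0$. Second, and more substantively, in the partial-differential case your claim that a branch with its initials and separants inverted faithfully detects consistency requires the coherence condition on the autoreduced set (Rosenfeld's lemma, which disposes of the $\Delta$-polynomials arising from cross-derivatives of distinct leaders); your sketch omits this, and it is automatic only when there is a single derivation. Third, everything you invoke (Ritt--Raudenbush, differential Nullstellensatz, differential closures) is characteristic-zero technology; this matches how the theorem is used in the paper, but it should be stated as a hypothesis, since the bare statement says ``any differential field $K$''. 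With those repairs your outline is correct, and arguably more transparent than the original elimination-theoretic proof, at the cost of heavier machinery.
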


On the complexity of this algorithm 
see \cite{LEONSANCHEZOVCHINNIKOV}
and the references therein. In the case
of one derivation see \cite{OVPOVO}.

\section{Exceptional integrability in a 
differential field}

Let $K$ be a differential field of characteristic 
zero with derivation $D$ and constant field $C$.
Let $x\in K$ be non-constant. We assume $Dx=1$ 
(this can always be achieved by replacing the given
derivation $D$ by $\frac{1}{Dx}D$).

Let $\xi, \alpha\in K$. By saying that $\alpha$ is 
an algebraic function of $\xi$ we mean that 
$A(\xi, \alpha)=0$ for some non-zero absolutely 
irreducible $A\in C[X, Y]$. By saying that $\alpha$ 
is exact (relative to $x$) we mean that there exists 
$z$ algebraic over $x$, perhaps in some differential
extension field, with $\frac{Dz}{Dx}=\alpha$.

The next result is a version of Theorem 3.2 in a
differential field, and it will include the statement 
of Theorem 1.2. 

\begin{definition*}
Let $K$ be a differential field of characteristic 0 
with constant field $C$ and $x\in K$ with $Dx=1$. 
Let $A, B_1,\ldots, B_k\in C[X,Y]$ be absolutely
irreducible. Let $\alpha\in K$ with $A(x, \alpha)=0$.
We say that $\alpha$ is $(B_1,\ldots, B_k)$-strictly
integrable if, in some differential 
extension field $\tilde{K}$
(with constant field $\tilde{C}$), there exist elements
$x_1,\ldots, x_k, z_1,\ldots, z_k, z$ with the following
properties:

\smallskip

\item{1.} $Dz=\alpha$

\smallskip

\item{2.} For $i=1,\ldots, k$ we have 
$B_i(x_i, \frac{1}{Dx_i}Dz_i)=0$

\smallskip

\item{3.} For $i=1,\ldots, k$ either $x_i$ or 
$z_i$ is algebraic over 
$\tilde{C}(x, x_1, z_1,\ldots, x_{i-1}, z_{i-1})$

\smallskip

\item{4.} $z$ is algebraic over 
$\tilde{C}\big(x, x_1, z_1, \ldots, x_k, z_k\big)$.
\end{definition*}

%

\begin{thm}[Theorem on exceptional integrability 
in a differential field]
Let $K$ be a differential field of characteristic 0 
with constant field $C$ and $x\in K$ with $Dx=1$. 
Let $A, B_1,\ldots, B_k\in C[X, Y]$ be absolutely irreducible. 
Let $\alpha\in K$ with $A(x, \alpha)=0$.

Suppose $\alpha$ is $(B_1,\ldots, B_k)$-strictly 
integrable. 

%
%
%
%

Then there exists a differential extension 
field $\tilde{K}$ of $K$ (with constants $\tilde{C}$)
containing elements $x_i$ algebraic over $x$,
constants $c_i$, and algebraic functions 
$\gamma_i(x_i)$ of degree bounded as in Theorem 3.2 
such that

\smallskip

\item{1.} $Dz=\alpha$

\smallskip

\item{2.} For each $i=1,\ldots, k$, 
$B_i(x_i, \frac{1}{Dx_i}Dz_i)=0$

\smallskip

\item{3.} For each $i=1,\ldots, k$, $x_i$ is 
algebraic over $C(x)$

\smallskip

\item{4.} $z=\sum_i c_iz_i+\gamma_i(x)$.

\smallskip
\noindent
\item{}
Moreover, the constants, the $x_i$ as algebraic 
functions of $x$ and the $\gamma_i(x_i)$ can be 
taken to be defined over an algebraic extension 
of $\Q(\overline{a}, \overline{b}_i)$, where
these are the tuples of coefficients of $A, B_i$.

%
\end{thm}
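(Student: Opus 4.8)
The plan is to transfer the complex-analytic Theorem~\ref{thm:ei} to an arbitrary differential field by means of the two theorems of Seidenberg, treating the coefficient tuples $\overline{a},\overline{b}_i$ of $A,B_i$ as the only genuine parameters. Because $Dx=1$ makes $x$ non-constant and $A$ is absolutely irreducible, the pair $(x,\alpha)$ has a fixed type over the constants, so both the hypothesis and the desired conclusion become existential first-order statements in the language of differential fields with parameters in $C_0:=\Q(\overline{a},\overline{b}_i)\subset C$. First I would record that $(B_1,\dots,B_k)$-strict integrability can be written as a single finite differential-polynomial system $(\Sigma_H)$ in the indeterminates $x,\alpha,x_1,z_1,\dots,x_k,z_k,z$ over $C_0$: the relations $Dx=1$, $A(x,\alpha)=0$, $B_i(x_i,Dz_i/Dx_i)=0$ and $Dz=\alpha$ are differential-polynomial, while the stagewise algebraicity clauses (that $x_i$ or $z_i$ be algebraic over $\tilde C(x,x_1,z_1,\dots,x_{i-1},z_{i-1})$, and that $z$ be algebraic over all of them) are captured by adjoining bounded-degree auxiliary equations with constant coefficients, using that ``being a constant'' is the condition $D(\cdot)=0$. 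I would likewise encode the conclusion as a bounded system $(\Sigma_G)$ of the explicit linear shape $z=\sum_i c_iz_i+\gamma_i(x)$ with $Dc_i=0$, each $x_i$ algebraic over $C(x)$, and each $\gamma_i$ of degree bounded as in Theorem~\ref{thm:complex-axs}; these degree bounds are what keep $(\Sigma_G)$ a finite system.

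Next I would establish the implication ``$(\Sigma_H)\Rightarrow(\Sigma_G)$'' over $\C$. A solution of $(\Sigma_H)$ generates a finitely generated differential field (each stage adjoins $x_i,z_i$ with one algebraic over the previous data and with $D$-images staying algebraic), which by the Seidenberg Embedding Theorem embeds $D$-compatibly into a field of meromorphic functions; normalising so that $D=d/dx$, the polynomials $A,B_i$ yield curves with differentials $(X_0,\omega_0),(X_i,\omega_i)$, $\omega_0=\alpha\,dx$, and the solution exhibits $(X_0,\omega_0)$ as $((X_1,\omega_1),\dots,(X_k,\omega_k))$-strictly integrable in the geometric sense. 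Theorem~\ref{thm:ei} then supplies a curve $Z$ dominant to each factor, constants $c_i$, and primitives $\gamma_i$ of exact differentials with $z_0=\sum_i c_iz_i+\gamma_i$; dominance makes each $x_i$ algebraic over $x_0=x$, and $z_0$ is a primitive of $\omega_0$, so this is exactly a solution of $(\Sigma_G)$ over $\C$.

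I would then apply the Seidenberg Elimination Theorem to $(\Sigma_H)$ and to $(\Sigma_G)$: solvability of each in a differential extension is, uniformly over all differential fields, equivalent to a finite disjunction of polynomial conditions on $(\overline{a},\overline{b}_i)$ over $\Z$ — ordinary, not differential, since these parameters are constants. As $\Q(\overline{a},\overline{b}_i)$ is finitely generated it embeds into $\C$, and the truth of a $\Z$-condition at $(\overline{a},\overline{b}_i)$ is preserved by this embedding; hence the implication verified over $\C$ holds at our parameters, and strict integrability of $\alpha$ forces $(\Sigma_G)$ to be solvable in a differential extension $\tilde K$ of $K$, which is precisely conclusions 1--4. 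For the rationality I would not read witnesses off the analytic solution but observe that, once degrees are fixed, the configurations solving $(\Sigma_G)$ form a variety $\mathcal V$ of finite type over $C_0$: differentiating $z=\sum_i c_iz_i+\gamma_i(x)$ turns it into the algebraic identity $\alpha=\sum_i c_i\beta_i(x_i)Dx_i+\gamma_i'(x)$ among $C_0$-algebraic functions of $x$, linear in the $c_i$ and in the coefficients of the $\gamma_i$. Since $\mathcal V$ is non-empty over $\C$ by the previous step, it is non-empty over $\overline{C_0}$, so $Z$, the constants $c_i$ and the $\gamma_i$ — equivalently the $x_i$ as algebraic functions of $x$ — may be chosen over an algebraic extension of $\Q(\overline{a},\overline{b}_i)$.

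I expect the principal obstacle to be the faithful encoding of the first step: turning the stagewise clause ``$x_i$ or $z_i$ is algebraic over $\tilde C(x,x_1,z_1,\dots,x_{i-1},z_{i-1})$'' into a genuine finite differential-polynomial system with constant coefficients, and bounding every degree involved, so that Seidenberg Elimination applies to a fixed system and the rationality variety $\mathcal V$ is of finite type. In the same step one must check that the transcendence-degree inequality underlying the proof of Theorem~\ref{thm:ei} is forced by the tower structure of strict integrability, and that an analytic solution of $(\Sigma_H)$ really does meet the geometric definition of strict integrability to which Theorem~\ref{thm:ei} applies.
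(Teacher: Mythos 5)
Your proposal is essentially the paper's own proof: a Seidenberg Embedding to realize the finitely generated differential field of the witness as meromorphic functions, the complex exceptional-integrability theorem (Theorem \ref{thm:ei}) to produce the linear relation with bounded-degree $\gamma_i$ and the $x_i$ algebraic over $x$, and then the Seidenberg Elimination Theorem --- with your two observations that the parameters $\overline{a},\overline{b}_i,c_i$ are constants, so the eliminated conditions are ordinary constructible conditions over $\Z$, and that the resulting systems cannot distinguish different solutions of $Dx=1$ or conjugate branches of $A(x,\alpha)=0$ --- to transfer back to $K$ and obtain definability over an algebraic extension of $\Q(\overline{a},\overline{b}_i)$. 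The one point of divergence is exactly the obstacle you flag at the end: the paper does not bound degrees a priori so as to make the hypothesis a single finite system, but instead introduces the device of a \emph{shape} $\tau$ --- the discrete data recording which of $x_i,z_i$ is algebraic at each stage and the degrees of the connecting relations --- encoding strict integrability as a countable family of finite systems $\Sigma_\tau$, of which the given witness realizes one specific $\tau$ to which elimination is then applied; similarly, the degrees of the $x_i$ over $x$ in the conclusion admit no a priori bound (the paper notes they encode weakly special subvarieties) and are fixed a posteriori from the relation supplied by Theorem \ref{thm:ei}, after which they appear in some elimination system. With that substitution --- fix the witness's shape rather than ``bounding every degree involved'' --- your argument goes through as written.
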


\begin{proof}
After extending the system with some further 
elements algebraic over $x$ and each $x_i$ to get 
tuples $\overline{x}, \overline{x_i}$, they describe
algebraic curves $X_i$ whose projective closures are
smooth.
After a Seidenberg embedding, the $z, z_i$ 
(perhaps adjusted by complex constants) are integrals 
of meromorphic differentials $\omega, \omega_i$ on 
these curves.

So we are in the situation of Theorem 3.2, and
we conclude that there is a linear relation 
as in 3.2, with some algebraic $\gamma_i$ of degree 
at most $d_i$ over $x_i$. This is then a linear 
relation in some differential extension field of $K$.

Let $\overline{a}, \overline{b}_i$ be the coefficient
vectors of $A, B_i$. Under the hypotheses, there is a
choice of degrees of the algebraic relations at stage $i$,
whether it is $x_i$ or $z_i$ that is algebraic over
previous elements, and the degree of the final 
algebraic relation of $z$ over the other elements.
Call this choice of discrete data a {\it shape\/}.
Given a shape $\tau$, the connecting
algebraic relations $G_i$, have some coefficients
$\overline{g}_i$ and the final algebraic
relation for $z$ we call $G_0$ with coefficients
$\overline{g}_0$.

Then the hypothesis implies that, for some shape $\tau$, 
we have the existence in a differential extension of a 
solution to a  suitable differential system $\Sigma_\tau$.
Let's write $DA=0$ as a short-hand for $Da=0$
for each coefficient of $A=\sum a_{ij}X^iY^j$, 
and $A(x, \alpha)=0$ as a shorthand for
$\sum a_{ij}x^i\alpha^j=0$, etc. Then the system
$\Sigma_\tau$ has the form:
$$
DA=0, \quad DB_i=0, i=1,\ldots, k, \quad Dx=1,
\quad A(x, \alpha)=0
$$
$$
Dz=\alpha, \quad 
B_i(x_i, \frac{1}{Dx_i}Dz_i)=0, i=1,\ldots, k,
$$
$$
DG_i=0, i=1,\ldots, k,\quad DG=0,
$$
$$
G_i(x, x_1, \ldots, z_{i-1}, y_i)=0, \quad
G(x, x_1, z_1,\ldots, x_k, z_k, z)=0,
$$
where $y_i=x_i$ or $z_i$ is dictated by the shape.

We can further augment this system with variables 
$c_i, \gamma_i$ for elements as above (here each 
$\gamma_i$ is a tuple of constants giving the
coefficients in a linear combination of elements 
in a basis of the primitives of exact
${\bf m}_i$-differentials) in the linear relation
to get a system $\Sigma^+_\tau$ including in addition
$$
\sum_{i=1}^k c_i(z_i+\gamma_i)=0.
$$

By the Seidenberg Elimination Theorem, the existence 
of solutions (in some differential extension field) 
is characterized by some differential
algebraic constructible systems over $\Z$ on the
non-eliminated variables.

Let us first eliminate all the way down to
$\overline{a}, \overline{b}_i, i=1,\ldots, k$.
Then the existence of a solution $(x, \alpha,\ldots)$
is characterized by some algebraic constructible 
conditions ${\rm Cond}(\overline{a}, \overline{b}_i)$.

Next consider the elimination down to 
$\overline{a}, \overline{b}_i, c_i, \gamma_i$.
As these elements are all constant, this is again a 
constructible algebraic condition over $\Z$, and
the theorem guarantees that suitable $c_i, w_i$ exists
if ${\rm Cond}(\overline{a}, \overline{b}_i)$ holds.
Thus, given $\overline{a}, \overline{b}_i$,
the admissible $c_i$ may be described in constructible
algebraic terms over $\overline{a}, \overline{b}_i$,
and we conclude the rationality statement.

Let us consider the elimination down to
$\overline{a}, \overline{b}_i, c_i, w_i, x$. 
The constructible differential algebraic system 
cannot distinguish the different solutions to 
$Dx=1$, as $x$ can only enter via $Dx$ in any equality
$f(\overline{a}, \ldots, x)=0$, while any
inequality $f(\overline{a}, \ldots, x)\ne 0$
will be automatically satisfied if such $x$ appears. 
So if, for some $\overline{a}, \overline{b}_i$, 
there is a  solution for some $x$ with $Dx=1$ 
then there is a solution for any such $x$. 

Next we include $\alpha$.
The system ${\rm Cond}(\overline{a}, \overline{b_i}, 
c_i, w_i, x, \alpha)$ can't distinguish between roots 
of $A(x,\alpha)$ over $\Q(\overline{a}, \overline{b}_i, x)$,  so if there is a solution for some branch then 
there is one for any branch. 
To see this, we note that if $A(x, \alpha)=0$ then
$D\alpha$ can be expressed as a ratio of polynomials
(depending on $A$) in $x$ and $\alpha$. Thus if
$f(\overline{a}, \ldots, x, \alpha)=0$ holds for 
some root $\alpha$ it holds for all its conjugates
$\alpha'$ over $C(x)$ as well, and likewise for 
$g(\overline{a}, \ldots, x, \alpha)\ne 0$.

Now in the situation of Theorem 3.2 we also conclude 
the existence of an algebraic curve in the base 
$X\times X_1\times\ldots\times X_k$, of suitable 
degrees over $X$ in each $X_i$. 

In general there is no bound on the 
degrees of the $x_i$ over $x$
in terms of the given data, as these
represent the equations of weakly special 
subvarieties.
But, in any case, for some degree there are such 
$x_i$ algebraic over $x$ given by 3.2, and so they 
appear in some elimination system, and can be taken
to be defined by (constructible) algebraic equations 
over the base. This completes the proof.
\end{proof}

There is a more general version of Theorem 5.1
analogous to Theorem 3.2. Here we do not require
a tower of fields with the specific form as in
$B$-strict integrability, but only  a sequence
of tuples $(x_i, z_i)$ with 
$B_i(x_i, \frac{1}{Dx_i}Dz_i)=0$ whose total
transcendence degree is too small.

\medbreak

\begin{thm}[Ax-Schanuel Theorem in a differential field]
Let $K$ be a differential field of characteristic 0 
with constant field $C$ and $x\in K$ with $Dx=1$. 
Let $B_1,\ldots, B_k\in C[X, Y]$ be absolutely irreducible. 

Suppose that there are elements 
$x_1,\ldots, x_k, z_1,\ldots, z_k$ in $K$ such that
$$
B_i(x_i, \frac{1}{Dx_i}Dz_i)=0
$$
for each $i$ and
$$
{\rm tr. deg.\/}\big(C(\overline{x}, \overline{z})/C\big)\le k.
$$
Then there is a non-empty set $I\subset\{1,\ldots, k\}$,
a differential extension $\tilde{K}$ 
(with constants $\tilde{C}$),
non-zero constants $c_i\in \tilde{C}$,
and algebraic functions $\gamma_i$ 
(primitives of suitable ${\bf m}_i$-exact differentials)
such that
$$
\sum_{i\in I} c_i(z_i+\gamma_i)=0.
$$
Moreover, we can find $x_i^*$ mutually algebraic over each
other and $z_i^*$ with $B_i(x_i^*, \frac{1}{Dx_i^*}Dz_i^*)=0$
for which the same relation holds, and we can take
the $c_i, \gamma_i$ to be defined over an algebraic
extension of the field of definition of the $B_i$. Again,
the relation corresponds to a coset of a suitable algebraic
subgroup of the corresponding product of generalized Jacobians.
\qed
\end{thm}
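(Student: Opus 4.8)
The plan is to prove this as the exact differential-field analogue of the complex-analytic Theorem 3.2, using the same embedding-and-elimination strategy as in the proof of Theorem 5.1, but with the tower hypothesis of $(B_1,\ldots,B_k)$-strict integrability replaced by the transcendence-degree bound, which is precisely the hypothesis that Theorem 3.2 consumes. First I would reduce to a finitely generated situation, replacing $K$ by the differential subfield generated over $\Q$ by $x, x_1,\ldots,x_k,z_1,\ldots,z_k$ and the coefficient tuples $\overline{b}_i$ of the $B_i$. After adjoining further elements algebraic over each $x_i$, the relation $B_i(X,Y)=0$ determines an affine curve whose smooth projective model $X_i$ carries a meromorphic differential $\omega_i=\alpha_i\,dx_i$, where $\alpha_i=\frac{1}{Dx_i}Dz_i$ satisfies $B_i(x_i,\alpha_i)=0$; one then chooses a modulus ${\bf m}_i$ on $X_i$ with $\omega_i\in H^0\big(X_i,\Omega({\bf m}_i)\big)$. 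Applying the Seidenberg Embedding Theorem realizes this differential field as germs of meromorphic functions on a disk compatible with $D$, so that each $x_i$ becomes a locus into $X_i$ and, after adjusting $z_i$ by a constant, $z_i=\int^{x_i}\omega_i$. Writing $G=J_1\times\cdots\times J_k$ for the product of generalized Jacobians with Lie algebra $T$, the data assembles into a locus $(x,z)$ in $G\times T$ with $\dim U=1$, and the hypothesis transfers verbatim to
$$
{\rm tr.deg}\big(\C(x_1,\ldots,x_k,z_1,\ldots,z_k)/\C\big)\le k,
$$
so Theorem 3.2 applies directly.

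Theorem 3.2 then supplies a non-empty $I$, non-zero constants $c_i$, primitives $\gamma_i$ of exact ${\bf m}_i$-differentials of bounded degree, and the relation
$$
\sum_{i\in I} c_i(z_i+\gamma_i)=0,
$$
together with the assertion that this relation defines a coset of an algebraic subgroup of the maximal NVQ subgroup $H$ of $G$. Renaming the curves indexed by $I$ as $(Y_i,\eta_i)$, the second half of Theorem 3.2 produces an algebraic curve in $\prod_i Y_i$ dominant to each factor, whose generic point yields the $x_i^*$ mutually algebraic over one another and the associated $z_i^*=\int^{\xi_i}\eta_i$ satisfying the same relation with the same constants. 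Since this relation holds among meromorphic germs, it holds in the differential field they generate, i.e.\ in a differential extension $\tilde K$ of $K$ with constants $\tilde C$, yielding all the qualitative conclusions.

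The remaining work is the rationality statement, carried out exactly as the corresponding elimination step in the proof of Theorem 5.1. I would set up a differential-algebraic system $\Sigma$ over $\Z$ in the coefficient variables $\overline{b}_i$, the elements $x_i,z_i$, auxiliary constant variables $c_i$ and $\gamma_i$ (the latter being tuples of coefficients in a fixed basis of the primitives of exact ${\bf m}_i$-differentials), and the connecting algebraic relations, subject to $DB_i=0$, $Dc_i=0$, $B_i(x_i,\frac{1}{Dx_i}Dz_i)=0$ and $\sum_i c_i(z_i+\gamma_i)=0$. By the Seidenberg Elimination Theorem, eliminating down to $\overline{b}_i$ yields a constructible condition ${\rm Cond}(\overline{b}_i)$ which, by the complex solution just produced, is satisfied. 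Eliminating instead down to $(\overline{b}_i,c_i,\gamma_i)$ — all of which are constant, so that the surviving system is a constructible \emph{algebraic} condition over $\Z$ — the theorem guarantees that admissible $c_i,\gamma_i$ exist and are cut out constructibly over $\Q(\overline{b}_1,\ldots,\overline{b}_k)$; hence they can be taken in an algebraic extension of the field of definition of the $B_i$.

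The step I expect to require the most care is not the application of Theorem 3.2 — with the transcendence-degree bound in hand this is immediate and, unlike Theorem 5.1, requires no analysis of the ``shape'' of a tower — but rather making the Seidenberg embedding interface cleanly with the generalized-Jacobian picture: one must check that the chosen moduli ${\bf m}_i$ make each $\omega_i$ an ${\bf m}_i$-differential, that the $z_i$ genuinely are the integrals attached to a locus in $G\times T$ (up to the harmless additive constants), and that the exact-primitives $\gamma_i$ produced over $\C$ have the asserted bounded degree. These are precisely the bookkeeping items already discharged in \S3 and in the proof of Theorem 5.1, so no new phenomenon arises; the one genuinely technical point is verifying, in the final elimination, that only constant variables survive, so that the surviving condition is algebraic rather than properly differential — this is what licenses the descent of $c_i$ and $\gamma_i$ to an algebraic extension of $\Q(\overline{b}_1,\ldots,\overline{b}_k)$.
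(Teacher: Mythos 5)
Your proposal is correct and follows essentially the route the paper intends: the theorem is stated there with no separate proof precisely because it follows by the method of Theorem 5.1, namely Seidenberg embedding into meromorphic germs, direct application of the complex Theorem 3.2 (whose hypothesis is exactly your transcendence-degree bound, so no tower/shape analysis is needed), and Seidenberg elimination down to the constant variables $c_i,\gamma_i$ for the rationality claim. One bookkeeping point to add: when you cut down to a finitely generated differential subfield before embedding, adjoin also the finitely many constants of $C$ occurring as coefficients in the algebraic relations among $x_1,\ldots,x_k,z_1,\ldots,z_k$, since otherwise the bound ${\rm tr.deg.}\le k$ need not survive over the smaller constant field (and hence over $\C$ after the embedding).
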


Then one has a corresponding version of Theorem 1.2, and
a decision procedure for this more general notion of
``weak integrability'' positing that $x_0, z_0$ solves some
``over-determined'' algebraic system on some $x_i, z_i$. 

\medbreak

\noindent{{\bf Proof of Theorem 1.1.\/}
This follows immediately from Theorem 5.1. \qed}{}

\section{Hodge Theoretic perspective on meromorphic 
differentials}

We establish here some well-known and less well-known relations on how to think of meromorphic forms on a curve cohomologically. We let $X$ denote a smooth compact complex curve and $U$ a Zariski-open set of $X$. Recall that $H^1(X,\C)$ has a natural subspace isomorphic to $H^0(X,\Omega_X^1)$ which we will denote by $F^1H^1(X,\C)$. The complex Jacobian $J_X$ can be naturally identified with $H^1(X,\C)/\big(F^1H^1(X,\C)+H^1(X,\Z)\big)$. To relate this to the usual construction of $H^0(X,\Omega_X^1)^\vee$ modulo periods, we simply use the identification $H^1(X,\C)/F^1H^1(X,\C)\cong H^0(X,\Omega_X^1)^\vee$ given by the cup product. 

For this section, $R$ will denote a sub-ring of $\C$, with the key cases being  $\Z,\Q,\C$.

\subsection{Residue-less forms}

For simplicity, and because this will come up as a special case, we first consider the case of meormorphic forms without residues. To understand this space better, let $U=X-D$ be an open set, and let $R\langle D\rangle$ denote the free $R$ module on $D$ and $R^0_D$ the submodule of coefficients that sum to $0$. Then we have a natural exact sequence
\begin{equation}\label{mixedhodge}
0\ra H^1(X,R)\ra H^1(U,R)\ra R^0_D\ra 0.
\end{equation}

Now, we can consider the infinite - dimensional space $H^0(U,\Omega_U)$ which has a natural map to $H^1(U,\C)$. There is a natural subspace $H^0(U,\Omega_U)'$ of forms whose residues at all points of $D$ are 0. Equivalently, this is the kernel of the map to $R^0_D$. By the exact sequence above, there is a natural map $H^0(U,\Omega_U)'\ra H^1(X,R)$. Note that if a form is exact than it is in the kernel of the above map.

\begin{thm}\label{residuelessisom}

If $U\neq X$, then there are isomorphisms
\begin{enumerate}
\item 
$$H^0(U,\Omega_U)/dH^0(U,\Oo_X) \cong H^1(U,R),$$ and 
\item $$H^0(U,\Omega_U)'/dH^0(U,\Oo_X) \cong H^1(X,R),$$
\end{enumerate}
\end{thm}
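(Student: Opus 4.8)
The plan is to prove part (1) as the algebraic de Rham comparison for the affine curve $U=X-D$, and then to deduce part (2) from the residue exact sequence \eqref{mixedhodge} by the five lemma. I will carry out the comparison with $\C$-coefficients, the natural map being $\C$-linear; the $R$-module statements are read through the integral (resp.\ rational) structure on the Betti side, compatibly with \eqref{mixedhodge} and the identification $\C^0_D=R^0_D\otimes_R\C$. Writing $D=P_1+\cdots+P_n$ with $n\ge 1$ (since $U\neq X$), every $\omega\in H^0(U,\Omega_U)$ is a rational $1$-form on $X$ with poles on $D$, so integration over cycles gives a period functional $\gamma\mapsto\int_\gamma\omega$, i.e.\ a class $\Phi(\omega)\in H^1(U,\C)=\Hom(H_1(U,\Z),\C)$. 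By Stokes' theorem an exact form $df$ with $f\in H^0(U,\Oo_X)$ has vanishing periods, so $\Phi$ descends to a map from $H^0(U,\Omega_U)/dH^0(U,\Oo_X)$.

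Then I would establish that $\Phi$ is injective and surjective. For injectivity, suppose $\Phi(\omega)=0$; since $H_1(U,\Z)$ is generated by the $2g$ cycles coming from $X$ together with small loops around the $P_i$, vanishing of all periods forces in particular the residues of $\omega$ to vanish, a single-valued primitive $f=\int\omega$ then exists on $U$, and the local analysis at each $P_i$ (a residue-free pole integrates to a pole, not a logarithm) shows $f$ extends to a rational function on $X$ regular on $U$, whence $\omega=df$ with $f\in H^0(U,\Oo_X)$. For surjectivity I would count dimensions, filtering $H^0(U,\Omega_U)=\bigcup_m H^0(X,\Omega_X(mD))$ and $H^0(U,\Oo_X)=\bigcup_m H^0(X,\Oo_X(mD))$ by pole order: for $m$ large, Riemann--Roch gives $\dim H^0(X,\Omega_X(mD))=g-1+mn$ and $\dim \im\big(d\colon H^0(X,\Oo_X((m-1)D))\to H^0(X,\Omega_X(mD))\big)=(m-1)n-g$, so the stabilized quotient has dimension $2g+n-1$. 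This matches $\dim H^1(U,\C)=1-\chi(U)=2g+n-1$, so the injective map $\Phi$ is an isomorphism, proving (1).

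Finally I would deduce (2). The residue map realizes $0\to H^0(U,\Omega_U)'/dH^0(U,\Oo_X)\to H^0(U,\Omega_U)/dH^0(U,\Oo_X)\xrightarrow{\mathrm{Res}}\C^0_D\to 0$ as a short exact sequence: surjectivity onto the sum-zero subspace is the residue theorem together with the existence of forms of prescribed residues, while the kernel of $\mathrm{Res}$ is $H^0(U,\Omega_U)'$ by definition and it contains $dH^0(U,\Oo_X)$ since exact forms are residue-free. Tensoring \eqref{mixedhodge} with $\C$ gives the corresponding Betti sequence, and $\Phi$, $\mathrm{Res}$, and the identity on $\C^0_D=R^0_D\otimes_R\C$ assemble into a commuting ladder of short exact sequences whose middle vertical is the isomorphism from (1) and whose right vertical is an identity; the five lemma then yields that $\Phi$ restricts to an isomorphism $H^0(U,\Omega_U)'/dH^0(U,\Oo_X)\cong H^1(X,\C)$, which is (2). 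One can cross-check via Riemann--Roch that both sides here have dimension $2g$.

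The main obstacle I expect is surjectivity in (1): the comparison map is manifestly defined and injective, but surjectivity is not formal and rests on the Riemann--Roch dimension count matching the Betti number (equivalently, on Grothendieck's algebraic de Rham theorem for the affine curve, which one could instead invoke directly). The second delicate point is the residue bookkeeping in the injectivity step — ensuring that a form with vanishing periods has not merely a single-valued meromorphic primitive but one that is regular on $U$, which is exactly where the absence of logarithmic (residue) terms is used.
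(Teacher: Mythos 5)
Your proposal is correct, and for part (1) it is essentially the paper's own proof: the same period map, the same injectivity argument (vanishing periods yield a single-valued primitive, which is meromorphic at $D$ and hence rational --- the paper invokes Riemann Existence where you do the local residue-free analysis, but it is the same point), and the same surjectivity-by-dimension-count via the pole-order filtration, with identical Riemann--Roch numbers: $\dim H^0(X,\Omega_X(mD))=g-1+m|D|$, $\dim dH^0(X,\Oo((m-1)D))=(m-1)|D|-g$, stabilized quotient of dimension $2g+|D|-1=\dim H^1(U,\C)$. The one place you genuinely differ is part (2): the paper proves it by running the same count once more on the primed subspace, using surjectivity of the residue map to get $\dim H^0(X,\Omega_X(mD))'=g+(m-1)|D|$ and hence a quotient of dimension $2g=\dim H^1(X,\C)$, whereas you deduce (2) from (1) by fitting the residue short exact sequence over the sequence \eqref{mixedhodge} tensored with $\C$ and applying the short five lemma. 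Both are sound and consume the same input (surjectivity of $\mathrm{Res}$ onto the sum-zero subspace, i.e.\ existence of differentials of the third kind, which the paper uses anyway in Lemma \ref{quotmap}); your route is slightly more structural and makes explicit the compatibility with \eqref{mixedhodge} that the paper leaves implicit, while the paper's second count is marginally more self-contained. Two small caveats, neither a gap: the right-hand vertical of your ladder is multiplication by $2\pi i$ rather than the identity (the boundary map in \eqref{mixedhodge} is integration around small loops), which is still an isomorphism, so the five lemma applies unchanged; and your reading of the $R$-coefficient statement --- a $\C$-linear comparison tracked through the $R$-structure on the Betti side --- is the only sensible interpretation of the theorem as stated and matches the paper's implicit usage.
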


\begin{proof}

We first prove injectivity. Suppose $\omega$ is in the kernel of the above map. Then The integral of $\omega$ along any closed loop is 0, which means $\omega$ has a well defined integral, or in other words $\omega=dg$ for a meromorphic function $g$ on $U$. Moreover, $g$ is meromorphic since $\omega$ is, and hence is algebraic (by Riemann Existence, for example).

For surjectivity, we simply compute dimensions. We want to work with finite dimensional spaces, so we notice that for each positive integer $m$, 
$H^0(X,\Omega_X(mD))'/dH^0(X,\Oo((m-1)D))\hookrightarrow H^0(U,\Omega_U)'/dH^0(U,\Oo_X)$. Computing dimensions using Riemann-Roch for $m$ large, we see that $$\dim H^0(X,\Omega_X(mD)) = g+m|D|-1,$$ and 
$$\dim H^0(X,\Omega_X(mD))' = g+(m-1)|D|,$$ and also
$$\dim H^0(X,\Oo((m-1)D)) = (m-1)|D|+(1-g);$$  and since the kernel of $d$ is 1-dimensional, 
$$\dim dH^0(X,\Oo((m-1)D)) = (m-1)|D|-g.$$ Hence, finally,
$$\dim \big(H^0(X,\Omega_X(mD))/dH^0(X,\Oo((m-1)D))\big) = |D|+2g-1,$$ which gives surjectivity of the first map, and 
$$\dim \big(H^0(X,\Omega_X(mD))'/dH^0(X,\Oo((m-1)D))\big) = 2g,$$ which gives surjectivity of the second map. 
\end{proof}

\subsection{Forms with Residues}

We let $U=X-D$ as in the previous subsection. Now we set $F^1H^1(U,\C)$ to be the image of $H^0(X,\Omega_X(D))$ - in other words, differential forms which have simple residues along $D$. We

\begin{lemma}\label{quotmap}

$H^1(U,\C)/F^1H^1(U,\C)\cong H^1(X,\C)/F^1H^1(X,\C)$

\end{lemma}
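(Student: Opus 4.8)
The plan is to realize the isomorphism as the map induced by the inclusion $H^1(X,\C)\hookrightarrow H^1(U,\C)$ coming from the exact sequence \eqref{mixedhodge}. Since a holomorphic form on $X$ has no poles along $D$, it lies in the image of $H^0(X,\Omega_X(D))$, so this inclusion carries $F^1H^1(X,\C)$ into $F^1H^1(U,\C)$ and therefore descends to a natural map
$$\phi: H^1(X,\C)/F^1H^1(X,\C)\ra H^1(U,\C)/F^1H^1(U,\C).$$
It then remains to prove that $\phi$ is an isomorphism. First I would check that both sides have dimension $g$, and then establish injectivity; the dimension count reduces the problem to a single containment.

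For the dimensions, the left-hand side has dimension $2g-g=g$. For the right-hand side, \eqref{mixedhodge} gives $\dim H^1(U,\C)=2g+|D|-1$ (using $U\neq X$, so $D\neq\emptyset$). I claim $\dim F^1H^1(U,\C)=g+|D|-1$: by Riemann--Roch $\dim H^0(X,\Omega_X(D))=g+|D|-1$, and the map $H^0(X,\Omega_X(D))\ra H^1(U,\C)$ is injective, because a form with at most simple poles along $D$ that equals $dg$ for some $g$ regular on $U$ forces $g$ to have no poles on $D$ (otherwise $dg$ would have a pole of order $\ge 2$), hence $g$ is constant and the form vanishes. Thus the right-hand side also has dimension $(2g+|D|-1)-(g+|D|-1)=g$, and it suffices to show $\phi$ is injective.

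For injectivity, suppose $\alpha\in H^1(X,\C)$ has image in $H^1(U,\C)$ lying in $F^1H^1(U,\C)$, represented by some $\omega\in H^0(X,\Omega_X(D))$. Applying the connecting map $H^1(U,\C)\ra \C^0_D$ of \eqref{mixedhodge} (with $R=\C$), the image of $\alpha$ is killed, since it lies in the image of $H^1(X,\C)$, which is the kernel; on the other hand this map sends $[\omega]$ to its residue divisor, so all residues of $\omega$ vanish. A form with at most simple poles and vanishing residues is holomorphic, whence $[\omega]\in F^1H^1(X,\C)$; and since $H^1(X,\C)\hookrightarrow H^1(U,\C)$ is injective, $\alpha=[\omega]$, so $\alpha$ maps to $0$ in the quotient. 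The main point to pin down carefully is precisely this identification of the connecting map in \eqref{mixedhodge} with the residue map on classes of meromorphic forms having simple poles; once that is in hand the rest is bookkeeping. Alternatively one can argue surjectivity directly: the residue map restricted to $F^1H^1(U,\C)$ is onto $\C^0_D$, because there exist differentials of the third kind with prescribed residues summing to zero, so any class in $H^1(U,\C)$ may be corrected by an element of $F^1H^1(U,\C)$ to land in the image of $H^1(X,\C)$.
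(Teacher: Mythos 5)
Your proposal is correct and in essence matches the paper's proof: the paper's two-line argument is precisely your closing alternative (Riemann--Roch shows the residue map restricted to $F^1H^1(U,\C)$ surjects onto $\C^0_D$, and the claim then follows by computing dimensions), while your main route verifies the complementary containment --- injectivity rather than surjectivity --- which is equivalent given that both quotients have dimension $g$. The identification of the connecting map of \eqref{mixedhodge} with the residue map on classes of forms with simple poles, which you rightly flag as the point to pin down, is the standard fact implicitly used by the paper as well.
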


\begin{proof}

Riemann-Roch shows that $F^1H^1(U,\C)$ surjects onto $\C^0_D$. The claim thus follows by computing dimensions.
\end{proof}

We thus obtain a canonical map $\phi_U:H^1(U,\Z)\ra H^1(X,\C)/F^1H^1(X,\C)$ by composing the map in lemma \ref{quotmap} with the natural map
$H^1(U,\Z)\rightarrow H^1(U,\C)/F^1H^1(U,\C)$. Tensoring up with $\C$ we obtain a map
$$\phi_{U,\C}:H^1(U,\C)\ra H^1(X,\C)/F^1H^1(X,\C) \otimes_\Z \C.$$ Finally, taking a direct limit gives a map
$$\phi_\C: \varinjlim_U H^1(U,\C)\ra H^1(X,\C)/F^1H^1(X,\C) \otimes_\Z \C.$$

Note that $\frac{df}{2\pi i f}$ is in the kernel of $\phi_U$  and hence of $\phi_\C$. Let $\Omega_{X,d\log}$ denote the complex vector space spanned by all such differentials, which naturally injects into $\varinjlim_U H^1(U,\C)$.

\begin{thm}\label{residueisom}

The map $\phi_\C$ induces an isomorphism between $\frac{\varinjlim_U H^1(U,\C)}{\Omega_{X,d\log}}$ and $H^1(X,\C)/F^1H^1(X,\C) \otimes_\Z \C.$

\end{thm}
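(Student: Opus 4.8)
The plan is to recognise $\phi_\C$ as the complexification of a single $\Z$-linear map and thereby reduce the statement to a kernel/image computation over $\Z$. Writing $W:=H^1(X,\C)/F^1H^1(X,\C)$ and $\phi:=\varinjlim_U\phi_U\colon\varinjlim_U H^1(U,\Z)\ra W$, the fact that tensoring commutes with direct limits gives $\phi_\C=\phi\otimes_\Z\C$. Since $\C$ is torsion-free, hence flat, over $\Z$, the functor $-\otimes_\Z\C$ is exact; applying it to the sequence $0\ra\ker\phi\ra\varinjlim_U H^1(U,\Z)\ra\operatorname{im}\phi\ra0$ shows $\ker\phi_\C=(\ker\phi)\otimes_\Z\C$ and $\operatorname{im}\phi_\C=(\operatorname{im}\phi)\otimes_\Z\C$. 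It therefore suffices to prove two integral statements: (i) $\operatorname{im}\phi=W$, and (ii) $\ker\phi$ is exactly the group of integral classes $[\tfrac{df}{2\pi i f}]$ with $f\in\C(X)^*$, whose $\C$-span is, by definition, $\Omega_{X,d\log}$. Granting these, $\operatorname{im}\phi_\C=W\otimes_\Z\C$ at once, while flatness shows that $(\ker\phi)\otimes_\Z\C$ maps isomorphically onto its $\C$-span $\Omega_{X,d\log}$ inside $\varinjlim_U H^1(U,\C)$; this is precisely the asserted isomorphism.

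To study $\phi$ over $\Z$ I would pass to the direct limit in the integral residue sequence \eqref{mixedhodge} (taking $R=\Z$), obtaining the exact sequence $0\ra H^1(X,\Z)\ra\varinjlim_U H^1(U,\Z)\xrightarrow{\operatorname{Res}}\operatorname{Div}^0(X)\ra0$, where $\operatorname{Div}^0(X)=\varinjlim_D\Z^0_D$ is the group of degree-zero divisors. On the subgroup $H^1(X,\Z)$ the map $\phi$ restricts to the period map $\phi_X$ given by reduction modulo $F^1$; since $H^1(X,\R)\cap F^1H^1(X,\C)=0$ by Hodge symmetry, $\phi_X$ is injective with image the period lattice $\Lambda$, and $W/\Lambda=J_X$. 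As $\phi$ sends $H^1(X,\Z)$ into $\Lambda$, it descends to a homomorphism $\bar\phi\colon\operatorname{Div}^0(X)\ra J_X$.

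The heart of the matter is to identify $\bar\phi$ with the Abel--Jacobi map (up to sign). Given $\gamma\in H^1(U,\Z)$ with residue divisor $\delta=\operatorname{Res}\gamma$, I would pick a differential of the third kind $\eta\in H^0(X,\Omega_X(D))=F^1H^1(U,\C)$ having the same residues as $\gamma$; then $\gamma-[\eta]\in H^1(X,\C)$, and because $[\eta]$ dies modulo $F^1$, $\phi(\gamma)$ equals the reduction of $\gamma-[\eta]$ modulo $F^1$. The classical reciprocity (bilinear) relations evaluate the image of this class in $J_X$ as $\operatorname{AJ}(\delta)$, giving $\bar\phi=\operatorname{AJ}$. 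Jacobi inversion then makes $\bar\phi$ surjective, so $\operatorname{im}\phi+\Lambda=W$; as $\Lambda\subseteq\operatorname{im}\phi$ this yields $\operatorname{im}\phi=W$, proving (i). For (ii), if $\phi(\gamma)=0$ then $\operatorname{AJ}(\delta)=0$, so Abel's theorem gives $\delta=\operatorname{div}(f)$; the integral class $[\tfrac{df}{2\pi i f}]$ has residue $\delta$ and lies in $\ker\phi$, so $\gamma-[\tfrac{df}{2\pi i f}]\in H^1(X,\Z)\cap\ker\phi_X=0$ by injectivity of the period map. Hence $\gamma=[\tfrac{df}{2\pi i f}]$, which is (ii). The main obstacle is the reciprocity identification $\bar\phi=\operatorname{AJ}$: this is the single genuinely geometric input, after which surjectivity and the kernel follow formally from Jacobi inversion, Abel's theorem, and the flatness of $\C$ over $\Z$.
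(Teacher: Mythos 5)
Your proposal is correct and takes essentially the same route as the paper: both arguments pivot on the extension $0\to H^1(X,\Z)\to H^1(X,\C)/F^1H^1(X,\C)\to J_X\to 0$, identify the induced map on degree-zero divisors with the Abel--Jacobi map (the paper simply cites Voisin, Prop.~12.7, where you propose proving this via the reciprocity laws for differentials of the third kind), and then use Jacobi inversion, Abel's theorem, and the flatness of $\C$ over $\Z$ exactly as the paper does. Your reorganization --- computing $\ker\phi$ and $\operatorname{im}\phi$ integrally first and tensoring at the end, rather than chasing $\phi_\C$ directly --- is a cosmetic difference only.
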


\textbf{Notation: In the following, we write
$R^{X(\C)}$ to mean the $R$-module of functions $X(C)\ra R$ with finite support, and  $R_0^{X(\C)}$
to mean the submodule of functions whose sum over all values is 0.}

\begin{proof}

We first establish surjectivity. We first observe the exact sequence 
$$0\ra H^1(X,\Z)\ra H^1(X,\C)/F^1H^1(X,\C) \ra J_X\ra 0$$ where $J_X(\C)$ is the complex points of the Jacobian of $X$. We thus get induced maps 
$\widetilde{\Phi_U}:\Z_0^D\ra J_X(\C)$ and $\widetilde{\Phi}:\Z_0^{X(\C)}\ra J_X(\C)$ which is the usual map sending degree 0 divisors to the Jacobian \cite[Prop 12.7]{Voisin}. This is well known to be surjective, and therefore $\C_0^{X(\C)}$ surjects onto $J_X(\C)\otimes_\Z\C$. This is equivalent to $\phi_\C$ being surjective after quotienting out by $H^1(X,\C)$. However, $\phi_\C$ induces an isomorphism $H^1(X,\C)\ra H^1(X,\Z)\otimes_\Z\C=H^1(X,\C)$. Thus we have established surjectivity.

Next we establish injectivity. Suppose that $\gamma\in \varinjlim_U H^1(U,\C)$ is in the kernel of $\phi_\C$. Then as we established above, the image $\tilde{\gamma}\in\C_0^{X(\C)}$ in $J_X(\C)\otimes_\Z\C$ is $0$. Since tensoring with $\C$ over $\Z$ is exact, this means that $\tilde{\gamma}$ is a $\C$-linear combination of principal divisors $(f)$. Since $\tilde{\frac{df}{2\pi if}}=(f)$, we may subtract an element $\tau$ of $\Omega_{X,d\log}$ from $\gamma$ to obtain an element $\gamma'$ in the kernel of $\phi_U$ whose image in $\C_0^D$ is $0$. However, that means that $\gamma'\in H^1(X,\C)$, and since ${\phi_U}\mid_{H^1(X,\C)}$ is injective, we must have that $\gamma'=0$. Thus $\gamma=\tau\in\Omega_{X,d\log}$ as desired.
\end{proof}

Finally, we obtain the following:

\begin{thm}\label{formsisom}
Let $M^1_X$ denote the group of all meromorphic 1-forms on $X$, and $M_X$ denote the field of  meromorphic functions on $X$. Then
$$\frac{M^1_X}{dM_X+d\log M^*_X}\cong H^1(X,\C)/F^1H^1(X,\C) \otimes_\Z \C.$$

\end{thm}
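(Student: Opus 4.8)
The plan is to realize the asserted isomorphism as a direct-limit version of the residue-free comparison of Theorem \ref{residuelessisom}(1), and then to pass to the quotient by the $d\log$ forms and invoke Theorem \ref{residueisom}. First I would establish the fundamental identification $M^1_X/dM_X \cong \varinjlim_U H^1(U,\C)$. Every meromorphic $1$-form $\omega$ on $X$ is holomorphic on some Zariski-open $U=X-D$, so it defines a de Rham class in $H^1(U,\C)$ and hence an element of $\varinjlim_U H^1(U,\C)$; this gives a $\C$-linear map $M^1_X\ra \varinjlim_U H^1(U,\C)$. For each fixed $U\neq X$, Theorem \ref{residuelessisom}(1) with $R=\C$ says precisely that $H^0(U,\Omega_U)\ra H^1(U,\C)$ is surjective with kernel $dH^0(U,\Oo_X)$. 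These isomorphisms are natural in the inclusions $U\subset U'$ (inclusion of forms and functions with fewer poles on the left, restriction in cohomology on the right), and the $U\neq X$ are cofinal, so I can pass to the direct limit. Since $\varinjlim_U H^0(U,\Omega_U)=M^1_X$, $\varinjlim_U H^0(U,\Oo_X)=M_X$, and direct limits are exact, this yields $M^1_X/dM_X\cong \varinjlim_U H^1(U,\C)$.

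Second, I would track where the $d\log$ forms are sent. For $f\in M^*_X$ the class of $d\log f = df/f$ in $\varinjlim_U H^1(U,\C)$ is, up to the harmless $2\pi i$ normalization used in defining $\Omega_{X,d\log}$, exactly the class singled out just before Theorem \ref{residueisom}. Hence under the isomorphism of the previous paragraph the subspace generated by the image of $d\log M^*_X$ corresponds precisely to $\Omega_{X,d\log}\subset \varinjlim_U H^1(U,\C)$. The isomorphism therefore descends to the quotients, giving
$$
\frac{M^1_X}{dM_X + d\log M^*_X} \cong \frac{\varinjlim_U H^1(U,\C)}{\Omega_{X,d\log}}.
$$
Applying Theorem \ref{residueisom} to the right-hand side identifies it with $H^1(X,\C)/F^1H^1(X,\C)\otimes_\Z\C$, which is the claim.

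The step that needs the most care — and the main obstacle — is the matching in the second paragraph: one must check that quotienting by the $d\log$ term has the same effect as quotienting by the $\C$-vector space $\Omega_{X,d\log}$ that it spans. Concretely, the residue of $d\log f$ at a point is the integral order of $f$ there, so the additive group $d\log M^*_X$ carries only integral residue data, whereas $\Omega_{X,d\log}$ is its full $\C$-span; the identification thus relies on reading the $d\log$ term in the denominator as the $\C$-subspace it generates (equivalently, on working with $\C$-coefficients throughout, consistent with $\varinjlim_U H^1(U,\C)$ and the $\otimes_\Z\C$ on the target). Once this is pinned down, the surjectivity and injectivity that do the real work are already furnished by Theorems \ref{residuelessisom} and \ref{residueisom}, and the remaining verifications — naturality of the comparison isomorphisms in $U$, cofinality of the $U\neq X$, and exactness of $\varinjlim$ — are routine.
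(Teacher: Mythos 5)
Your proposal is correct and follows essentially the same route as the paper: take the direct limit over $U$ of the first isomorphism in Theorem \ref{residuelessisom} to get $M^1_X/dM_X\cong \varinjlim_U H^1(U,\C)$, then apply Theorem \ref{residueisom}. Your extra care in the second paragraph — reading $d\log M^*_X$ in the denominator as the $\C$-span it generates, so that it matches $\Omega_{X,d\log}$ (which is defined as a complex vector space, and is forced to be one as the kernel of the $\C$-linear map $\phi_\C$) — is a point the paper leaves implicit, and you resolve it in the intended way.
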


\begin{proof}
Taking the direct limit over $U$ of the first isomorphism
in Theorem \ref{residuelessisom}, we see that
$\frac{M^1_X}{dM_X}\cong \varinjlim_U H^1(U,\C)$. 
The result now follows from Theorem \ref{residueisom}.

\end{proof}

\begin{remark}
From a Hodge theory perspective, the $\Z$ extension
\eqref{mixedhodge} along with the $\C$-subspace
$F^1H^1(U,\C)$ constitutes the mixed Hodge structure 
on $H^1(U,\C)$. The forms $\frac1{2\pi i}d\log(f)$ are
precisely the Hodge vectors. The extension class is
determined by the subgroup generated by $\Z^0_D$ in 
the Jacobian. The universal extension is then naturally
constructed the direct limit $\lim H^1(U,\C)$ 
quotiented out by all trivial extensions, which are 
just the $d\log(f)$. Thus, the above result is 
essentially saying that 
$$H^1(X,\Z)\ra H^1(X,\C)/F^1H^1(X,\C)\ra J_X
$$ 
is the universal extension by powers of $\Z(-1)$ 
of the pure Hodge structure on $H^1(X,\C)$.
\end{remark}

\section{Decision Procedures}

The purpose of this section is to provide an algorithm to fully solve the decision problem of whether one period-function can be expressed by finitely many others others.

\subsection{Algorithms from algebraic-geometry}

Following \cite{PTV} we use finitely generated fields (fields of finite type) but we restrict to characteristic 0. We work over the algebraic closure of a finite type field $K$, presented as he fraction field of a finitely-generated $\Z$-algebra. 

The following result is well-known to experts but we couldn't find a clean reference, so we record it in the form we need:

\begin{lemma}\label{lem: Computing Mordell-Weil Groups}

Let $A/\ol{K}$ be an abelian variety, and let $\phi:\Z^n\ra A(\ol K)$ be a homomorphism, presented by specifying the image of a basis. Then there is a decision procedure which returns the image $\phi(\Z^k)$ as $F+T$ where $T$ is a torsion subgroup and $F$ is a free abelian group. Equivalently, the decision procedure returns generators for the kernel of $\phi.$

\end{lemma}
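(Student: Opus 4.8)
The plan is to reduce the statement to computing the relation lattice $\ker\phi\subseteq\Z^n$, and to produce a basis for it by combining three effective ingredients: exact equality testing of algebraic points, effective control of torsion, and rigorous (interval-arithmetic) computation of the N\'eron--Tate height pairing. First I would enlarge $K$ so that $A$ and the images $P_1,\dots,P_n$ of the standard basis are all defined over a single finitely generated field $L\subset\ol K$ of characteristic zero. The image $\Gamma=\langle P_1,\dots,P_n\rangle$ is then finitely generated, hence automatically of the form $F\oplus T$ with $F$ free and $T$ finite, so the only content is to compute $\ker\phi$, equivalently a $\Z$-basis of the relations $\sum a_iP_i=O$.

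The elementary decidable operations are as follows. Given an explicit $a\in\Z^n$, deciding whether $\sum a_iP_i=O$ is a question of equality of algebraic points of $A(\ol L)$, hence decidable. The torsion subgroup $A(L)_{\mathrm{tors}}$ is finite and effectively computable: reducing a model of $A$ modulo a good prime of the finitely generated base embeds the prime-to-$p$ torsion into a finite group $A(\F_q)$, yielding an effective bound $N$ on the exponent of $T$; one then enumerates torsion and, for any $Q=\sum a_iP_i$, decides whether $N\cdot Q=O$ and computes the order. Finally, the height pairing $\langle P_i,P_j\rangle$ is computable to arbitrary precision with rigorous error bars, either directly over $L$ via the available height machinery or by passing to a number-field specialization good for the finitely many points in question and invoking the standard algorithms there.

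The heart of the procedure is a dovetailing of two semi-decision procedures. On one side I enumerate integer vectors by increasing size and test $\sum a_iP_i=O$ (and, separately, torsion-ness); this certifies every genuine relation and produces upper bounds for $r=\mathrm{rank}\,\Gamma$. On the other side I compute the Gram matrix $M=(\langle P_i,P_j\rangle)$ with interval arithmetic to increasing precision: an $s\times s$ minor whose computed value is certified away from $0$ proves that the corresponding $s$ points are independent modulo torsion. Since every tuple is \emph{either} dependent (caught by the relation search) \emph{or} independent (caught by a nonvanishing minor), the two searches together terminate and determine $r$ and a subset, say $P_1,\dots,P_r$, independent modulo torsion with certified nonzero regulator. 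I would then recover $\ker\phi$ in two stages: first the saturated torsion-relation lattice $\Lambda^{\mathrm{sat}}=\{a:\sum a_iP_i\in T\}$, by expressing each remaining $P_j$ in terms of $P_1,\dots,P_r$ modulo torsion (the rational coordinates are read off from $M$ to increasing precision and each candidate relation confirmed by the exact torsion test, a search that halts because such a relation exists); then the exact lattice $\ker\phi\subseteq\Lambda^{\mathrm{sat}}$ of finite index $|T|$, cut out using the computed orders of the torsion points $\sum a_iP_i$.

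The main obstacle is precisely the passage from approximate, real-valued height data to exact integer conclusions: a numerically tiny height pairing can neither by itself certify a relation nor a priori bound the size of relation generators, and an effective Lehmer-type lower bound for heights is not available in general. The proposal circumvents this by the two-sided certification above --- exact verification of candidate relations through decidable equality of algebraic points on one side, and rigorous strictly positive lower bounds for sub-regulators via interval arithmetic on the other. Completeness and termination follow from the dichotomy that each candidate independence statement is decided by exactly one of the two branches.
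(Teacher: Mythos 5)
Your procedure has the same overall architecture as the paper's proof: a dovetailed pair of semi-decision procedures (the paper literally says ``by day we can look for elements in the kernel\ldots by night we can try to prove it''), a separate treatment of torsion so that one first computes the saturated relation lattice and then cuts out $\ker\phi$ inside it using finiteness of torsion, and a reduction of the finite-type field case to number fields. Where you genuinely differ is in the certificates. For independence, the paper argues arithmetically: it uses N\'eron--Tate heights to compute images in $A(K)'/mA(K)'$ (divisibility testing), together with the fact that points independent modulo torsion remain independent in $A(K)/pA(K)$ for large primes $p$. You argue analytically: rigorous interval computation of the N\'eron--Tate Gram matrix, with a minor certified away from zero as the independence certificate, termination being guaranteed by non-degeneracy of the pairing modulo torsion. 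Conversely, for torsion the paper uses the height machinery while you use reduction modulo good primes; both are standard. Over number fields your route is sound (canonical heights of points on an explicitly embedded abelian variety are computable with effective error bounds via the geometric convergence of $h(2^nP)/4^n$), and it arguably gives a cleaner termination argument than divisibility testing.

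The one soft spot is the finite-type case, and it is exactly the ingredient the paper makes explicit. Your phrase ``a number-field specialization good for the finitely many points in question'' conceals a nontrivial theorem: specialization is a homomorphism, so relations specialize and independence certificates transfer \emph{up}, but a particular specialization may destroy independence. For your night-side search to terminate on independent inputs you must know that rank-preserving specializations exist and will be encountered when you enumerate specializations; otherwise a run of bad specializations makes the procedure diverge (no relation exists to be found by day, and no specialized regulator is ever nonzero by night). This is precisely Masser's specialization theorem \cite{MASSER}, which the paper invokes; a N\'eron--Silverman type density statement would also suffice. Your fallback of computing heights ``directly over $L$'' is also delicate: the naive geometric N\'eron--Tate pairing over a finitely generated field can be degenerate (it vanishes on the trace part), so one needs Moriwaki-type arithmetic heights, whose rigorous computability is far less standard. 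With the specialization theorem inserted, your proof closes.
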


\begin{proof}

First, by increasing $K$ we may assume that $A$ and $\phi$ are defined over $K$ itself.

Next, assume $K$ is a number field. Then using the theory of the N\'eron-Tate height we can compute the torsion subgroup $A(K)_{tor}$ of $A(K)$. Set $A(K)':=A(K)/A(K)_{tor}$. if we compute the kernel of $\psi:\Z^n\ra A(K)'$ then finding the kernel of $\phi$ is simply a matter of checking the finitely many sublattices of $\ker\psi$ finite index at most 
$\#A(K)_{tor}$. We thus focus on finding $\ker\psi$.

Moreover, again using the theory of the N\'eron-Tate height we may compute the image of $\psi$ in $A(K)'/mA(K)'$ for every positive integer $m$, as this just amounts to checking which of finitely many elements are $m$'th powers in $A(K)$. Note that if $P_1,\dots,P_d$ are independent in $A(K)'$ then they are independent in $A(K)/pA(K)$ for a large enough prime $p$.  Thus, by day we can look for elements in the kernel of $\phi$, and by night we can try to prove it by showing independence modulo large primes $p$.

Finally, assume $K$ is a finite type field. Recall we may write $K$ as the fraction field of a finitely-generated $\Q$-algebra $R$. We may assume $A$ spreads out over $R$ such that $A(K)=A(R)$. Then by the main theorem of \cite{MASSER} there are specializations $R\ra L$ where $L$ is a number field which induce isomorphisms $A(R)\ra A(L)$. Thus, we may by day look for elements in $\ker\phi$ and by night try to prove we've found them all by computing the same for all specializations.
\end{proof}

\subsection{Rephrasing through traces}

\begin{definition*}

Let $(X_i,\omega_i), i=1,2$ be curves with
differentials. Let $D\subset X_1\times X_2$ 
be a curve with no fibral-components. Then if
$\omega_2=\pi_{2*}\pi_1^* \omega_1$ we say that
$\omega_2$ is a \emph{trace-image} of 
$\omega_1$. 

\end{definition*}

\begin{lemma}\label{lem:trace-version}
Let $(X_i,\omega_i), 0\leq i\leq m$ be curves with meromorphic differentials over $\ol K$. TFAE:

\begin{itemize}
    \item $(X_0,\omega_0)$ is integrable in terms of $(X_1,\omega_1)$
    \item $\omega_0$ is in the linear span of differentials on $X_0$ which are trace-images of the $\omega_i$, and exact differentials.
\end{itemize}

\end{lemma}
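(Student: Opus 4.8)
The plan is to prove the two implications separately, with the bridge between ``integrable'' and ``lies in the span of trace-images and exact forms'' being the operation of pushing forward (taking the trace of) a differential along a correspondence, together with the fact that this operation is compatible with exterior differentiation. Throughout I would write $\pi_0,\pi_1,\ldots,\pi_m$ for the projections from a correspondence curve to the factors, and use two standard facts: for a dominant map of curves $f\colon Z\to X_0$ of degree $d$ one has the projection formula $\pi_{0*}\pi_0^*\omega_0=d\,\omega_0$, and the field trace commutes with $d$, so that $\pi_{0*}$ carries exact differentials to exact differentials (with algebraic primitives, via $\pi_{0*}\,dg=d(\operatorname{Tr}_{Z/X_0}g)$).

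For the forward implication, suppose $(X_0,\omega_0)$ is integrable in terms of $\{(X_1,\omega_1),\ldots,(X_m,\omega_m)\}$. First I would invoke Theorem~\ref{thm:ei} to obtain, after passing to a subsequence, an irreducible curve $Z\subset X_0\times\cdots\times X_m$ dominant to each factor, constants $c_i$, and primitives $\gamma_i(\xi_i)$ of exact ${\bf m}_i$-differentials such that locally on $Z$ one has $z_0=\sum_i c_i z_i+\gamma_i$. Differentiating along $Z$ turns this into an identity of differentials on $Z$,
\[
\pi_0^*\omega_0=\sum_i c_i\,\pi_i^*\omega_i+\sum_i\pi_i^*(d\gamma_i).
\]
I would then apply $\pi_{0*}$. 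The left-hand side becomes $(\deg\pi_0)\,\omega_0$; each $\pi_{0*}\pi_i^*\omega_i$ is, after factoring $Z\to X_0\times X_i$ through its image $Z_i$ (which has no fibral component precisely because $Z$ is dominant to each factor), a nonzero scalar multiple of the trace-image of $\omega_i$ along $Z_i$; and each $\pi_{0*}\pi_i^*(d\gamma_i)$ is exact. Dividing by $\deg\pi_0$ exhibits $\omega_0$ in the span of trace-images of the $\omega_i$ and an exact form.

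For the reverse implication I would unwind the definition of strict integrability. Suppose $\omega_0=\sum_i c_i\,\eta_i+dg$ with $g$ algebraic and each $\eta_i=\pi_{0*}\pi_i^*\omega_i$ a trace-image along a correspondence $D_i\subset X_0\times X_i$ with no fibral components. A primitive of $\eta_i$ is the field trace along $D_i\to X_0$ of a primitive of $\pi_i^*\omega_i$, which is a pullback of $\int\omega_i$. Concretely, for each of the finitely many branches $j$ of $D_i\to X_0$ I would introduce a pair $(x_i^{(j)},z_i^{(j)})$, where $x_i^{(j)}$ is the $X_i$-coordinate of the $j$-th point, algebraic over $x$, and $z_i^{(j)}=\int\omega_i$ at that point, so that $B_i(x_i^{(j)},\tfrac{1}{Dx_i^{(j)}}Dz_i^{(j)})=0$. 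Assembling these pairs into a tower and setting $z_0=\sum_i c_i\sum_j z_i^{(j)}+g$ verifies the conditions in the definition of $B$-strict integrability, since each $x_i^{(j)}$ is algebraic over $x$ and $z_0$ is a linear-plus-algebraic function of the $z_i^{(j)}$.

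The step I expect to be the main obstacle is making the differential-geometric bookkeeping precise: checking that $\pi_{0*}$ really sends $\pi_i^*\omega_i$ to a scalar multiple of the trace-image (via the factorization through $Z_i$ and the projection formula), that it sends exact forms to exact forms with algebraic primitives, and that the various multivalued primitives can be matched \emph{locally} and then globalized into genuine meromorphic differentials on the $X_i$ with the correct residue and pole data. Once the trace operation and its compatibility with $d$ are in hand, both directions become essentially formal.
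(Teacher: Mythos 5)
Your proposal is correct, and your forward direction is essentially the paper's: invoke Theorem \ref{thm:ei} to get the curve $Z$ with the linear relation among the $z_i$, then take the trace down to $X_0$ (the paper states this in one line; your push-forward computation via the projection formula, the factorization through the image $Z_i\subset X_0\times X_i$, and the observation that dominance to both factors rules out fibral components, is exactly the implicit bookkeeping). The reverse direction is where you genuinely diverge. The paper works globally and algebraically: for each correspondence $D_i$ of degree $d_i$ over $X_0$ it forms the subscheme $E_i$ of the $d_i$-fold fiber power of $D_i$ over $X_0$ whose generic points admit $d_i$ distinct maps to $D_i$, takes the fiber product $C$ of all the $E_i$ over $X_0$, and uses the $\prod_i S_{d_i}$-torsor structure to identify forms on $X_0$ with invariant forms on $C$; pulling the linear relation back to $C$ then exhibits the trace-image as an honest sum $\sum_\pi \pi^*\omega_{f(i)}$ over the $d_i$ maps, and one restricts to an irreducible component. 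You instead verify the definition of strict integrability directly and locally: on a small disk in $X_0$ avoiding branch points, the $d_i$ analytic branches of $D_i\to X_0$ supply the pairs $(x_i^{(j)},z_i^{(j)})$ with $x_i^{(j)}$ algebraic over $x$, each satisfying $B_i(x_i^{(j)},\tfrac{dz_i^{(j)}}{dx_i^{(j)}})=0$ since no component of $D_i$ is fibral, and $z_0=\sum_i c_i\sum_j z_i^{(j)}+g$ closes the tower (note each $B_i$ is used $d_i$ times, which $S$-integrability permits). Your route is more elementary — it sidesteps the torsor construction entirely because the definition of integrability is itself local on a disk, where branches are honest regular functions — while the paper's construction buys a single irreducible algebraic curve mapping to all factors on which the relation holds globally, matching the geometric form of Theorem \ref{thm:ei} and keeping everything algebraic over the given field, which is convenient for the decision-procedure applications. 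Two small points to keep straight in your write-up: condition (2) may involve several distinct trace-images of the same $\omega_i$ via different correspondences (handled by further repeats of $B_i$), and the identity $\pi_{0*}\,dg=d(\mathrm{Tr}_{Z/X_0}\,g)$ requires knowing the primitive is single-valued meromorphic, which holds because an algebraic primitive has finite, hence trivial, period group — the point underlying Theorem \ref{residuelessisom}.
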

\begin{proof}

By Theorem \ref{thm:ei} condition (i) is equivalent to the existence of a curve $C$ with maps $\phi_j$ to the $C_{f(j)}, j>0$   and a single map $f$ to $C_0$ such that $f^*\omega_0$ is in the linear span of the $\phi_j^*\omega_{f(j)}$, for some function $f$. Assuming this is the case, taking the trace from $C$ to $X_0$ yields (ii).

Now suppose that (ii) holds, so that there are correspondences $D_i\subset X_0\times X_{f(i)}$ such that the push-pull of $\omega_{f(i_)}$ contain $\omega_0$ in their linear span. 

Let $d_i$ be the degree of $d_i$ over $X_0$. We first let $E_i$ be the maximal closed reduced subscheme of the $d_i$ fiber product of $D_i$ over $X_0$ whose generic points admit  $d_i$ distinct maps to $D_i$. We then let $C$ be the fiber product of all of the $E_i$ over $X_0$. Note that for each $i$ , $C$ has $d_i$ distinct maps to $C_{f(i)}$. Moreover, there is an action of $G:=\prod_i S_{d_i}$ action on $C$ making it a torsor over $X_0$. We may thus identify forms on $X_0$ with $G$-invariant forms on $C$.

Now consider a fixed $i$, and the form $\pi_{2*}\pi_1^*\omega_{f(i)}$. If we pull this form back to $E_i$, it becomes $S_{d_i}$-invariant and may be expressed as a sum over $\pi^*\omega_{f(i)}$ where $\pi$ ranges over all $d_i$ maps to $X_0$. Thus the same is true once this form is pulled back all the way to $E$. 

Finally, we simply write the linear relation guaranteed by (ii) and pull it back to $E$, obtaining (i). Note that our curve $E$ is not irreducible, but we may restrict to any irreducible component.
\end{proof}

\subsection{Decision procedures}

We are now ready to give an algorithmic procedure for deciding whether $(X_0,\omega_0)$ is integrable in terms of the $(X_i,\omega_i)$. The procedure deals with residue-less forms and forms-with-residues separately, though the spirit is similar.

\subsubsection{\bf None of the $\omega_i, i>0$ have any residues}\!

\medskip
{\bf Step 1: Reduction to computing a basis for trace images}

We claim that it is sufficient to obtain a basis for the linear span of trace-images of each $\omega_i$ for each $i$, modulo exact differential forms. Indeed, suppose we have obtained such a basis $L$. Then by lemma \ref{lem:trace-version} it is sufficient to decide whether $\omega_0$ is in the linear span of $L$ and exact-forms. Now, suppose ${\bf m}$ is a large enough modulus such that $\omega_0$ and $L$ have their poles in ${\bf m}$. Then it is enough to restrict to exact-differential forms whose poles are in ${\bf m}$. These are finite dimensional vector spaces that can be easily computed, and the decidability question can be answered from here.

{\bf Step 2: Computing correspondences between curves}

By Theorem \ref{residueisom} there is a natural isomorphism between reside-less forms modulo exact ones and $H^1$ of the curve. These isomorphisms are just given by integration alon cycles and so are compatible with push-forwrads and pull-backs.
Thus, given a curve $D\subset X_0\times X_1$ we obtain a natural map $\phi_D:H^1(X_1,\C)\ra H^1(X_0,\C)$, and the image of $[\omega_1]$ under this map will be the class of the trace-image of $\omega_1$ under $C$. Hence, it is enough to obtain a basis for all maps $\phi_D$ corresponding to all correspondences $D$.

This is done by Theorem 8.15 of
\cite{PTV}

The above returns a set of cycles $D_i$, which 
finishes the algorithm. 


\subsubsection
{\bf At least one of the $\omega_i, i>0$ 
has a non-zero residue}\!\medskip

{\bf Step 1: Getting $\log(x)$}

We first show that we have access to the logarithm function. Suppose wlog that $\omega_1$ have at least one residue. We construct a map $f:C\ra \PP^1$ which sends all the points at which $\omega_1$ has a residue to $0$ except one which gets sent to $\infty$. Then the trace of $\omega_1$ will be some multiple of $\frac{dx}{x}$ and so by summing up the primitives we obtain the logarithm function. Crucially, this gives us access to all differentials of the form $d\log f$ where $f$ is a meromorphic function. 

{\bf Step 2: Reduction to Computing a basis for trace images}

We now proceed very much like before. We claim that it is sufficient to obtain a basis for the linear span of trace-images of each $\omega_i$, modulo exact and log-exact differential forms. Indeed, suppose we have obtained such a basis $L$. Then Theorem \ref{lem:trace-version} says that it is sufficient to check whether $\omega_0$ is in the span of $L$, and exact and log-exact differential forms. 

Now, by Theorem \ref{residueisom}, we may work in $H^1(X_0,\C)/F^1H^1(X_0,\C)\otimes_\Q \C$. So we may first take the image in $J_0\otimes_\Q\C$. Thus $L$ and $\omega_0$ gives us a morphism $G:\Z^M\ra J_0\otimes_\Q\C $. Moreover, this morphism is very explicit, given by just taking the residues of our forms. By taking bases for the residues over $\Q$, this becomes a question of identifying the subgroup in $J_0$, which is Lemma \ref{lem: Computing Mordell-Weil Groups}. Thus we may obtain the kernel of $G$. 

In that case that the image of $\omega_0$ in  $J_0\otimes_\Q\C$ is already not expressible in terms of $L$, we can stop. Otherwise, by subtracting off the appropriate element in $L$, we may assume the image of $\omega_0$ is $0$, and we may restrict to the subgroup $L'$ of $L$ in the kernel of $G$. Now adding log-exact forms will introduce a non-zero residue, and so we are reduced to checking whether $\omega_0$ is in the span of $L'$ and exact forms, which we do as in the previous subsubsection.

\subsection{Elliptic integrals}

In this section we explain how to generalize the above algorithm the following question: \emph{When is a period-function expressible in terms of elliptic integrals?}

Recall that an elliptic integral is defined as $$f(x)=\int_c^x R(t,\sqrt{P(t)})dt$$ where $P(t)$ is a polynomial of degree 3 or 4, and $R$ is a rational function. If we consider the elliptic curve $E_P:={y^2=P(x)}$ then $\omega=R(x,y)dx$ is a rational differential form on $E_P$, and its integral is precisely the pullback of $f(x)$. Thus, in answering the above question, we have the following:

\begin{thm}\label{ellipticdecision}
Let $(X,\omega)$ be a curve with a rational differential form. Then there is a decision procedure for determining where $(X,\omega)$ is integrable in terms of the set of all elliptic curves with all rational differentials on them.
\end{thm}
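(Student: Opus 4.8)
The plan is to reduce the infinite condition ``integrable in terms of \emph{all} elliptic curves'' to a finite-dimensional membership test inside $H^1(X)$, and then to make that test effective using the isogeny decomposition of the Jacobian $J_X$ together with the two algorithms already assembled in this section. Throughout I would work in the cohomological picture of Section~6: the quotient $M^1_X/(dM_X+d\log M_X^*)\cong H^1(X,\C)/F^1H^1(X,\C)\otimes_\Z\C$ of Theorem~\ref{formsisom}, refined by the residue-less isomorphism of Theorem~\ref{residueisom} when $\omega$ has no residues.

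First I would invoke the natural analogue of Lemma~\ref{lem:trace-version} for the infinite set $S$ of all elliptic curves with all rational differentials. Integrability in terms of $S$ means integrability in terms of some finite subset; since the relevant span is a finite-dimensional subspace of $H^1(X)$, it is already attained by finitely many trace-images, so this is equivalent to asking whether the class of $\omega$ lies in the $\C$-linear span of the classes of trace-images of rational differentials on elliptic curves. A correspondence $D\subset E\times X$ induces a morphism of Hodge structures $H^1(E,\Q)\to H^1(X,\Q)$ whose image is a sub-Hodge-structure of rank at most $2$; by Hodge symmetry any rank-$2$ weight-one sub-Hodge-structure is automatically of elliptic type $\{(1,0),(0,1)\}$. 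Hence every elliptic trace-image lies in $W\otimes\C$, where $W\subset H^1(X,\Q)$ denotes the sum of all rank-$2$ sub-Hodge-structures.

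The conceptual heart is the reverse inclusion, that $W\otimes\C$ is \emph{spanned} by elliptic trace-images. By Poincar\'e reducibility $H^1(X,\Q)$ is a semisimple polarizable Hodge structure, so $W$ is precisely the sum of those isotypic components whose simple constituent is the $H^1$ of an elliptic curve, equivalently the components coming from the $1$-dimensional isogeny factors of $J_X$. Each such factor gives a quotient $J_X\to E$, hence by Abel--Jacobi an honest nonconstant map $X\to E$, i.e. a genuine correspondence realizing that piece of $H^1(X)$ as a trace-image. Thus the span of elliptic trace-images equals $W\otimes\C$, and the decision problem becomes: compute the $\Q$-subspace $W$, then test membership of the class of $\omega$.

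To make $W$ effective I would decompose $J_X$ up to isogeny far enough to isolate its elliptic factors. The (semisimple) endomorphism algebra $\End^0(J_X)$ is computed by running the correspondence algorithm of \cite{PTV} (Theorem 8.15, as used above) on self-correspondences $D\subset X\times X$; this yields the action of $\End^0(J_X)$ on $H^1(X,\Q)$ and hence its isotypic decomposition $H^1(X,\Q)=\bigoplus_\lambda V_\lambda$ with $V_\lambda\cong H^1(A_\lambda)^{\oplus n_\lambda}$. A simple factor $A_\lambda$ is elliptic exactly when $\dim_\Q H^1(A_\lambda)=2$, a condition visible from the decomposition; summing the corresponding $V_\lambda$ produces a $\Q$-basis of $W$. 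Finally I would test membership: in the residue-less case this is linear algebra in $H^1(X,\C)$ modulo exact forms via Theorem~\ref{residueisom}, while if $\omega$ has residues I would first dispose of the divisor-class (residue) component through the Mordell--Weil computation of Lemma~\ref{lem: Computing Mordell-Weil Groups} inside $J_X\otimes_\Q\C$ and then reduce to the residue-less test, exactly as in the preceding decision procedures. The main obstacle is the effective isotypic decomposition isolating the elliptic factors; everything else is the bookkeeping of Section~6 together with the algorithms of \cite{PTV} and Lemma~\ref{lem: Computing Mordell-Weil Groups}.
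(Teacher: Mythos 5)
Your proposal is correct and takes essentially the same route as the paper's own proof: both pass to the class of $\omega$ in $H^1(X,\C)/F^1H^1(X,\C)\otimes_\Q\C$, isolate the elliptic part of $J_X$ (your isotypic sum $W$ is exactly the paper's isogeny decomposition $J_X\sim A\times B$, found in both cases via correspondences in $X\times X$ computed by the algorithm of \cite{PTV}), dispose of the residue/Jacobian component via Lemma \ref{lem: Computing Mordell-Weil Groups}, and finish with linear algebra in $H^1(X,\C)$. Your Poincar\'e-reducibility argument that elliptic trace-images span precisely $W\otimes\C$ is just a slightly more explicit rendering of a step the paper leaves implicit.
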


\begin{proof}

We proceed in as in the previous two sections. Note that $\omega$ gives us a class $[\omega]$ in $H^1(X,\C)/F^1H^1(X,\C)\otimes_\Q\C$ by Theorem \ref{residueisom}. The key to our proof is first decomposing the Jacobian $J_X$ into its elliptic and non-elliptic part. Note that if $J_X$ contains an elliptic curve then there is a map from $X$ to that elliptic curve. Thus we must find all elliptic curves with a map from $X$. These all show up inside $X\times X$ via their induced correspondences, and thus we may find them all as before by computing the Neron-Severi group of $X\times X$. Having found all maps from $X$ to elliptic curves, we may write $J_X\sim A\times B$ via an explicit isogeny such that $A$ is a power of elliptic curves and $B$ does not have elliptic factors. Note that this induces natural isomorphisms
$$H^1(X,\C)/F^1H^1(X,\C)\cong \tilde A\times \tilde B$$ where $\tilde A$ denotes the universal cover, and also $$H^1(X,\C)/F^1H^1(X,\C)\otimes_\Q\C \cong \tilde A\otimes_\Q\C\times \tilde B\otimes_\Q\C.$$

Finally, the question is whether $[\omega]$ has trivial image in $\tilde B\otimes_\Q\C$. As before, we first check whether the image of $[\omega]$ is $0$ in $B\otimes_\Q\C$. This is merely a question about the Mordell-Weil group of $B$ and can therefore be answered as before.

Supposing this is the case, we then adjust $\omega$ by the image of an elliptic form so that $[\omega]$ has trivial image also in $\tilde A\otimes_\Q\C$ and thus in $J_X\otimes_\Q\C$ (and is therefroe residueless). Thus, $[\omega]$ is now merely a class in $H^1(X,\C)$ and we must check if its contained in $H^1(A,\C)$. But this is easy, as we may generate a basis for $H^1(A,\C)$ using correspondences of $X$ with elliptic curves. The proof is therefore complete.

\end{proof}

\begin{remark}
\begin{enumerate}
    \item Note that in the above proof we handled the case of all elliptic integrals, but it is easy to adjust the proof so as to allow only consider $(E,\omega)$ where $\omega$ is regular, or only having simple poles, or being residue-less. This would correspond to periods of differentials of the first, second, or third kind. 
    \item Since (almost) every Abelian variety of dimension 2 or 3 is the Jacobian of a curve, one may similarly make an argument for the family of all differentials on all genus $g$ curves, where $g=2,3$.
\end{enumerate}
\end{remark}




\section{A Connection with unlikely intersections}

For a fully general pencil of curves and
differentials, Masser and Zannier \cite{MASSERZANNIER} 
are able to precisely (and effectively) describe 
when the generic fibre in the pencil is not 
elementary integrable, but that  elementary 
integrabililty of fibres is not ``unlikely'' in 
the  Zilber-Pink sense (and indeed occurs for 
infinitely many values of the parameter). Further, 
in the ``unlikely'' case, they are able to prove 
the finiteness statement via their results on 
Relative Manin-Mumford.

Here we restrict to regular differentials and observe
that the question of exceptional integrability
again leads to questions of Zilber-Pink type, 
and that results in the literature answer them 
in some cases.

\begin{thm}
Fix some smooth projective curve $Y$ of genus 
$g\ge 1$ and a non-zero regular differential $\eta$, and 
suppose the Jacobian ${\rm Jac}(Y)$ is simple. 

Suppose that $(X_t, \omega_t)$ is a pencil of curves 
of genus $g$ and regular differentials over some
quasi-projective base curve $B$ (so we have removed
finitely many points where the fibre $X_t$ is not 
smooth or the differential not regular).
We can assume that $B\subset \mathcal{A}_g$. 

Then there are only finitely many $t$ such 
that $z_t=\int^{x_t}\omega_t$ is
$\{w=\int^y\eta\}$-integrable if $B$ is not  a 
weakly special subvariety.
\end{thm}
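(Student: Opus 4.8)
The plan is to reduce the integrability condition on a single fibre to the existence of an isogeny onto $\mathrm{Jac}(Y)$, and then to recognise the resulting finiteness question as a known instance of the André--Pink--Zannier conjecture. First I would unwind what $\{w=\int^y\eta\}$-integrability means for a fixed fibre $(X_t,\omega_t)$ using Lemma \ref{lem:trace-version} (equivalently Theorem \ref{thm:ei} with $k=1$). Since $\eta$ and $\omega_t$ are regular, there are no non-trivial exact differentials to account for: an exact regular differential $df$ on a projective curve forces $f$ to be constant, hence $df=0$. Thus $(X_t,\omega_t)$ is integrable in terms of $(Y,\eta)$ precisely when $\omega_t$ lies in the linear span of trace-images of $\eta$ under correspondences $D\subset Y\times X_t$. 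Each such trace-image is the value at $\eta$ of the map $H^0(Y,\Omega_Y)\to H^0(X_t,\Omega_{X_t})$ induced by the homomorphism $\mathrm{Jac}(Y)\to\mathrm{Jac}(X_t)$ attached to $D$. As $\omega_t\neq 0$, integrability forces the existence of a \emph{non-zero} homomorphism $\mathrm{Jac}(Y)\to\mathrm{Jac}(X_t)$.

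The second step exploits the two hypotheses on $Y$. Because $\mathrm{Jac}(Y)$ is simple, any non-zero homomorphism $\mathrm{Jac}(Y)\to\mathrm{Jac}(X_t)$ has finite kernel, so its image is an abelian subvariety of dimension $g$. But $\dim\mathrm{Jac}(X_t)=g$ as well, since $X_t$ has genus $g$; hence the image is all of $\mathrm{Jac}(X_t)$ and the map is an isogeny. Therefore the set of integrable parameters is contained in
$$
\Lambda:=\{\,t\in B : \mathrm{Jac}(X_t)\ \text{is isogenous to}\ \mathrm{Jac}(Y)\,\},
$$
that is, in the intersection of $B$ with the isogeny class of the fixed point $[\mathrm{Jac}(Y)]\in\mathcal{A}_g$. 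Note this is exactly the point where both the simplicity and the equal-dimension hypotheses are used: without them one would only land in a larger special locus rather than in a single isogeny class.

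Finally, I would argue by contraposition. Suppose there are infinitely many integrable $t$; then $\Lambda$ is infinite, hence Zariski dense in the irreducible curve $B$. The isogeny class of $[\mathrm{Jac}(Y)]$ is a subset of the generalized Hecke orbit of a single point of $\mathcal{A}_g$, so $B=\overline{\Lambda}^{\mathrm{zar}}$ is the Zariski closure of a subset of such an orbit. By Orr's theorem establishing the André--Pink--Zannier conjecture for curves in $\mathcal{A}_g$, every irreducible component of this closure is weakly special; thus $B$ is weakly special, contradicting the hypothesis. Consequently, when $B$ is not weakly special, only finitely many $t$ can be integrable.

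The main obstacle is the input invoked in this last step: finiteness of the intersection of a non-weakly-special curve with the isogeny class of a fixed point is precisely an unlikely-intersection (André--Pink--Zannier) statement, which is known only in certain cases. Everything preceding it is a fairly direct translation of regular-differential integrability into the existence of an isogeny, made clean by the simplicity and equal-dimension hypotheses on $\mathrm{Jac}(Y)$; the genuine content, and the reason the result is only conditional ``in some cases'', lies in the geometry-of-numbers/o-minimality machinery underlying the Hecke-orbit finiteness.
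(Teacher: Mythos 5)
Your proposal is correct and follows essentially the same route as the paper: integrability forces $\mathrm{Jac}(X_t)$ to be isogenous to $\mathrm{Jac}(Y)$ (using simplicity plus equal dimension), and then Orr's theorem on the Andr\'e--Pink--Zannier conjecture for curves in $\mathcal{A}_g$ gives finiteness unless $B$ is weakly special. The only cosmetic difference is that you extract the isogeny from the trace-image formulation (Lemma \ref{lem:trace-version}), nicely noting that the exact part vanishes for regular forms, whereas the paper phrases the same step as a non-product weakly special subvariety of $\mathrm{Jac}(X_t)\times\mathrm{Jac}(Y)^n$ dominant to both factors.
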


The same holds (with same proof) if we assume 
several differentials on $Y$ are available, on 
several different (simple Jacobian) curves $Y$.

\begin{proof}
The integrability condition entails that there  
is a non-product weakly special subvariety of 
$${\rm Jac}(X_t)\times {\rm Jac}(Y)^n$$
which is
dominant to both factors. But $Y$ is simple, so this can only happen if
$X_t$ is isogenous to $Y$. If there are infinitely many such 
specialisations we find that $B$ has infinitely many points in the 
isogeny class of the moduli point $[Y]\in\mathcal{A}_g$ of $Y$. 
This is a problem of Zilber-Pink type, more specifically
of Andr\'e-Pink-Zannier type: one expects
this can only happen if $B$ is a proper weakly special subvariety,
and for curves this is a theorem of Orr 
\cite[Theorem 1.2]{ORR}. 
\end{proof}

Note that if $X_t$ is isogenous to $Y$ this does 
not in general lead to integrability (the 
differentials might not satisfy a suitable 
linear relation), so our theorem is not sharp. 
But isogeny does lead to integrability in the 
case $g=1$, since elliptic curves have only 1 regular differential (up to scale). However Theorem 8.1 is uninteresting 
in that case as the moduli space is one-dimensional, 
so $B$ is always weakly special.

We can consider however the question of when two 
given elliptic logarithms $z_1=\int^{x_1}\omega_1$
on an elliptic curve $X_1$ and $z_2=\int^{x_2}\omega_2$
on an elliptic curve $X_2$ are simultaneously 
integrable by means of a third elliptic logarithm
$z_3=\int^{x_3}\omega_3$ on an elliptic curve $X_3$. 
If now $X_1, X_2, X_3$ vary in a pencil then
we may assume that the pencil is parameterised 
by the points of a curve $V\subset Y(1)^3$. 
Then the Zilber-Pink conjecture predicts that 
the set of $t\in V$ for which two functions are 
integrable in terms of the third (i.e. the three 
elliptic curves are pairwise isogenous) is finite 
unless $V$ is contained in a proper special 
subvariety. Some partial results on this problem 
are in \cite{HABEGGERPILA}.

\bigbreak
\noindent
{\bf Acknowledgements.\/} The authors thank
Daniel Bertrand and David Masser for enlightening correspondence.

\bigbreak

\bigskip

\noindent
\leftline{JP: Mathematical Institute, 
University of Oxford, Oxford, UK.}
\rightline{pila@maths.ox.ac.uk}

\bigskip

\noindent
\leftline{JT: Department of Mathematics, 
University of Toronto, Toronto, Canada.}
\rightline{jacobt@math.toronto.edu}

\vfil
\eject

\end{document}